\newtheorem{theorem}{Theorem}[section]
\newtheorem{corollary}[theorem]{Corollary}
\newtheorem{definition}[theorem]{Definition}
\newtheorem{example}[theorem]{Example}
\newtheorem{lemma}[theorem]{Lemma}
\newtheorem{proposition}[theorem]{Proposition}
\newtheorem{remark}[theorem]{Remark}
\newenvironment{proof}[1][Proof]{\noindent\textbf{#1.} }{\ \rule{0.5em}{0.5em}}
\begin{document}

\title{A Sheaf On The Second Spectrum Of A Module\thanks{%
This paper is submitted to Communications in Algebra on June 15th, 2017 for
the referee process.}}
\author{Se\c{c}il \c{C}eken and Mustafa Alkan}
\date{cekensecil@gmail.com, alkan@akdeniz.edu.tr}
\maketitle

\begin{abstract}
Let $R$ be a commutative ring with identity and $Spec^{s}(M)$ denote the set
all second submodules of an $R$-module $M$. In this paper, we construct and
study a sheaf of modules, denoted by $\mathcal{O}(N,M)$, on $Spec^{s}(M)$
equipped with the dual Zariski topology of $M$, where $N$ is an $R$-module.
We give a characterization of the sections of the sheaf $\mathcal{O}(N,M)$
in terms of the ideal transform module. We present some interrelations
between algebraic properties of $N$ and the sections of $\mathcal{O}(N,M)$.
We obtain some morphisms of sheaves induced by ring and module homomorphisms.

\textbf{2010 Mathematics Subject Classification: }13C13, 13C99, 14A15, 14A05.

\textbf{Keywords and phrases: }Second submodule, dual Zariski topology,
sheaf of modules.
\end{abstract}

\section{Introduction}

Throughout this article all rings will be commutative rings with identity
elements and all modules will be unital left modules. Unless otherwise
stated $R$ will denote a ring. Given an $R$-module $M$, the annihilator of $M
$ (in $R$) is denoted by $ann_{R}(M)$ and for an ideal $I$ of $R$, the
annihilator of $I$ in $M$ is defined as the set $(0:_{M}I):=\{m\in M:Im=0\}$%
. Clearly, $(0:_{M}I)$ is a submodule of $M$.

Recall that a \textit{sheaf of rings (modules)} $\mathcal{F}$ on a
topological space $X$ is an assignment of a ring (module) $\mathcal{F}(U)$
to each open subset $U$ of $X$, together with, for each inclusion of open
subsets $V\subseteq U$, a morphism of rings (modules) $\rho _{UV}:\mathcal{F}%
(U)\longrightarrow \mathcal{F}(V)$ subject to the following conditions:

$(i)$ $\mathcal{F}(\emptyset )=0$.

$(ii)$ $\rho _{UU}=id_{\mathcal{F(}U)}$.

$(iii)$ If $W\subseteq V\subseteq U$, then $\rho _{UW}=\rho _{VW}o\rho _{UV}$%
.

$(iv)$ If $U$ is an open subset of $X$ and $\{U_{\alpha }\}_{\alpha \in
\Lambda }$ is an open cover of $U$, and if $f\in \mathcal{F}(U)$ is an
element such that $\rho _{UU_{\alpha }}(f)=0$ for all $\alpha \in \Lambda $,
then $f=0$.

$(v)$ If $U$ is an open subset of $X$ and $\{U_{\alpha }\}_{\alpha \in
\Lambda }$ is an open cover of $U$, and if we have a collection elements $%
f_{\alpha }\in \mathcal{F}(U_{\alpha })$ with the property that for all $%
\alpha $, $\beta \in \Lambda $, $\rho _{U_{\alpha }\left( U_{\alpha }\cap
U_{\beta }\right) }(f_{\alpha })=\rho _{U_{\beta }(U_{\alpha }\cap U_{\beta
})}(f_{\beta })$, then there is an element $f\in \mathcal{F}(U)$ such that
for all $\alpha $, $f_{\alpha }=\rho _{UU_{\alpha }}(f)$.

If $\mathcal{F}$ is a sheaf on a topological space $X$, we refer to $%
\mathcal{F}(U)$ as the \textit{sections} of $\mathcal{F}$ over the open
subset $U$. We call the maps $\rho _{UV}$ as the \textit{restriction maps }%
(cf. \cite{Hartshone}).

The prime spectrum of a ring $R$, denoted by $Spec(R)$, consists of all
prime ideals of $R$ and it is non-empty. For each ideal $I$ of $R$, the sets 
$V(I)=\{p\in Spec(R):I\subseteq p\}$, where $I$ is an ideal of $R$, satisfy
the axioms for closed sets of a topology on $Spec(R)$, called the \textit{%
Zariski topology of} $R$.

It is well-known that for any commutative ring $R$, there is a sheaf of
rings on $Spec(R)$, denoted by $\mathcal{O}_{Spec(R)}$, defined as follows:
For an open subset $U\subseteq Spec(R)$, we define $\mathcal{O}_{Spec(R)}(U)$
to be the set of all functions $s:U\rightarrow \tbigsqcup\nolimits_{p\in
U}R_{p}$, such that $s(p)\in R_{p}$, for each $p\in U$, and such that for
each $p\in U$, there is a neighborhood $V$ of $p$, contained in $U$, and
elements $a$, $f\in R$, such that for each $q\in V$, we have $f\not\in q$
and $s(q)=\frac{a}{f}$ in $R_{q}$ (see \cite{Hartshone}).

Let $M$ be an $R$-module. A proper submodule $N$ of $M$ is said to be 
\textit{prime }if for any $r\in R$ and $m\in M$ with $rm\in N$, we have $%
m\in N$ or $r\in ann_{R}(M/N):=(N:M)$. If $N$ is a prime submodule of $M$,
then $p=(N:M)$ is a prime ideal of $R$. In this case, $N$ is called a $p$%
-prime submodule of $M$. The set of all prime submodules of a module $M$ is
called the \textit{prime spectrum} of $M$ and denoted by $Spec(M)$. For any
submodule $N$ of an $R$-module $M$, we have a set $V(N)=\{P\in
Spec(M):(N:M)\subseteq (P:M)\}$. Then the sets $V(N)$, where $N$ is a
submodule of $M$ satisfy the axioms for closed sets of a topology on $%
Spec(M) $, called the \textit{Zariski topology of }$M$. Several authors have
investigated the prime spectrum and the Zariski topology of a module over
the last twenty years (see for example \cite{Lu-95}, \cite{Lu}, \cite%
{Lu-2007}, \cite{Lu-Noether}, \cite{McCasland-Smith}). Recently, some
authors have investigated a sheaf structure on the prime spectrum of a
module which generalizes the sheaf of rings $\mathcal{O}_{Spec(R)}$ on the
topological space $Spec(R)$. In \cite{U.Tekir}, the author obtained an $R$%
-module $\mathcal{O}_{Spec(M)}(U)$ for each open subset $U$ of $Spec(M)$
equipped with the Zariski topology of $M$ such that $\mathcal{O}_{Spec(M)}$
is a sheaf of modules on $Spec(M)$. In \cite{Ab-Le-2}, the authors defined
and studied a sheaf of modules which is denoted by $\mathcal{A}(N,M)$ on the
topological space $Spec(M)$ equipped with the Zariski topology of $M$, where 
$M$ and $N$ are two $R$-modules. In fact, both $\mathcal{O}_{Spec(M)}$ and $%
\mathcal{A}(N,M)$ are generalizations of the sheaf of rings $\mathcal{O}%
_{Spec(R)}$ to modules. In \cite{Ab-Le-2}, the authors proved that if $N=R$,
then $\mathcal{A}(R,M)$ is a scheme on $Spec(M)$. This scheme structure were
investigated in \cite{Ab-Le-1}.

Recently, a dual theory of prime submodules has been developed and
extensively studied by many authors. The dual notion of prime submodules was
first introduced by S. Yassemi in \cite{Yassemi}. A submodule $N$ of an $R$%
-module $M$ is said to be a \textit{second submodule} provided $N\neq 0$
and, for all $r\in R$, $rN=0$ or $rN=N$. If $N$ is a second submodule of $M$%
, then $p=ann_{R}(N)$ is a prime ideal of $R$. In this case, $N$ is called a 
$p$-second submodule of $M$ (cf. \cite{Yassemi}). In recent years, second
submodules have attracted attention of various authors and they have been
studied in a number of papers (see for example \cite{Abuhlail}, \cite%
{sec-colloquium}, \cite{Sec2}, \cite{Second rad}, \cite{APK}, \cite{CAS1}, 
\cite{CAS2}, \cite{CA}, \cite{CA-AMS}).

The set of all second submodules of a module $M$ is called the \textit{%
second spectrum} of $M$ and denoted by $Spec^{s}(M)$. As in \cite%
{dual-zariski}, for any submodule $N$ of an $R$-module $M$ we define $%
V^{s\ast }(N)$ to be the set of all second submodules of $M$ contained in $N$%
. Clearly $V^{s\ast }(0)$ is the empty set and $V^{s\ast }(M)$ is $%
Spec^{s}(M)$. Note that for any family of submodules $N_{i}$ $(i\in I)$ of $%
M $, $\cap _{i\in I}V^{s\ast }(N_{i})=V^{s\ast }(\cap _{i\in I}N_{i})$. Thus
if $Z^{s\ast }(M)$ denotes the collection of all subsets $V^{s\ast }(N)$ of $%
Spec^{s}(M)$ where $N\leq M$, then $Z^{s\ast }(M)$ contains the empty set
and $Spec^{s}(M),$ and $Z^{s\ast }(M)$ is closed under arbitrary
intersections. But in general $Z^{s\ast }(M)$ is not closed under finite
unions. A module $M$ is called a \textit{cotop module} if $Z^{s\ast }(M)$ is
closed under finite unions. In this case $Z^{s\ast }(M)$ is called the 
\textit{quasi-Zariski topology} on $Spec^{s}(M)$ (see \cite{ATF1}). Note
that in \cite{Abuhlail} a cotop module was called a top$^{\text{s}}$-module.
More information about the class of cotop modules can be found in \cite%
{Abuhlail} and \cite{ATF1}.

Let $M$ be an $R$-module and $N$ be a submodule of $M$. We define the set $%
V^{s}(N):=\{S\in Spec^{s}(M):ann_{R}(N)\subseteq ann_{R}(S)\}$. In \cite[%
Lemma 2]{dual-zariski}, it was shown that $%
V^{s}(N)=V^{s}((0:_{M}ann_{R}(N)))=V^{s\ast }((0:_{M}ann_{R}(N)))$, in
particular, $V^{s}((0:_{M}I))=V^{s\ast }((0:_{M}I))$ for every ideal $I$ of $%
R$, and that the set $Z^{s}(M)=\{V^{s}(N):N\leq M\}$ satisfies the axioms
for the closed sets. Thus there exists a topology, say $\tau ^{s}$, on $%
Spec^{s}(M)$ having $Z^{s}(M)$ as the family of closed subsets. This
topology is called the dual Zariski topology of $M$ (see \cite[Lemma 2]%
{dual-zariski}). Dual Zariski topology, the second spectrum of modules, and
related notions have been investigated by some authors in recent years (see 
\cite{Abuhlail3}, \cite{Abuhlail}, \cite{ATF1}, \cite{ATF2}, \cite%
{dual-zariski}, \cite{CA-JAA}, \cite{CA-Malezya-2017} and \cite{Farshadifar}%
).

In this paper, we define and study a sheaf structure on the second spectrum
of a module. Let $M$ be an $R$-module. In Section 2, we construct a sheaf,
denoted by $\mathcal{O}(N,M)$, on $Spec^{s}(M)$ equipped with the dual
Zariski topology of $M$, where $N$ is an $R$-module. Firstly, we find the
stalk of the sheaf $\mathcal{O}(N,M)$ (see Theorem \ref{Stalk}). In Theorem %
\ref{Theorem8}, we give a characterization for the sections of the sheaf $%
\mathcal{O}(N,M)$ in terms of the ideal transform module. Let $R$ be a
Noetherian ring and $M$ be a faithful secondful $R$-module. We prove that if 
$N$ is a free, projective or flat $R$-module, then so is $\mathcal{O}%
(N,M)(Spec^{s}(M))$ (see Theorem \ref{Free-O(N,M)(X)}). In Section 3, we
deal with a scheme structure on the second spectrum of a module. In Theorem %
\ref{Scheme}, we prove that $\mathcal{O}(R,M)$ is a scheme when $M$ is a
faithful secondful $R$-module and $Spec^{s}(M)$ is a $T_{0}$-space. Then we
define two morphisms of locally ringed spaces by using ring and module
homomorphisms (see Theorem \ref{morphism1} and Corollary \ref{morphism2}).

\section{A Sheaf Structure On The Second Spectrum Of A Module}

Throughout the rest of the paper $M$ will be an $R$-module, $X^{s}$ will
denote $Spec^{s}(M)$ and we consider $X^{s}$ with the dual Zariski topology
unless otherwise stated. For every open subset $U$ of $X^{s}$, we set $%
Supp^{s}(U)=\{ann_{R}(S):S\in U\}$.

In this section, we construct a sheaf on $X^{s}$ and investigate some
properties of this sheaf.

\begin{definition}
Let $N$ be an $R$-module. For every open subset $U$ of $X^{s}$ we define $%
\mathcal{O}(N,M)(U)$ to be the set of all elements $(\beta _{p})_{p\in
Supp^{s}(U)}\in \tprod\limits_{p\in Supp^{s}(U)}N_{p}$ in which for each $%
Q\in U$, there is an open neighborhood $W$ of $Q$ with $Q\in W\subseteq U$
and there exist elements $t\in R$, $m\in N$ such that for every $S\in W$, we
have $t\not\in p:=ann_{R}(S)$ and $\beta _{p}=\frac{m}{t}\in N_{p}$.
\end{definition}

Let $U$ and $V$ be open subsets of $X^{s}$ with $\emptyset \neq V\subseteq U$
and $\beta =(\beta _{p})_{p\in Supp^{s}(U)}\in \mathcal{O}(N,M)(U)$. Then it
is clear that the restriction $(\beta _{p})_{p\in Supp^{s}(V)}$ belongs to $%
\mathcal{O}(N,M)(V)$. Therefore, we have the restriction map $\rho _{UV}:%
\mathcal{O}(N,M)(U)\longrightarrow \mathcal{O}(N,M)(V)$, $\rho _{UV}(\beta
)=(\beta _{p})_{p\in Supp^{s}(V)}$ for all $\beta =(\beta _{p})_{p\in
Supp^{s}(U)}\in \mathcal{O}(N,M)(U)$. We define $\rho _{U\emptyset }:%
\mathcal{O}(N,M)(U)\longrightarrow \mathcal{O}(N,M)(\emptyset )=0$ to be the
zero map. It is clear from the local nature of the definition, $\mathcal{O}%
(N,M)$ is a sheaf with the restriction maps defined above. We can define a
map $\tau _{N}^{U}:N\longrightarrow \mathcal{O}(N,M)(U)$ by $\tau
_{N}^{U}(n)=(\frac{n}{1})_{p\in Supp^{s}(U)}$ for all $n\in N$. We note that 
$\tau _{N}^{U}$ is an $R$-module homomorphism. Clearly, $\rho _{UV}$ $o$ $%
\tau _{N}^{U}=\tau _{N}^{V}$.

Recall that for any $r\in R$, the set $D_{r}:=Spec(R)\backslash V(Rr)$ is
open in $Spec(R)$ and the family $\{D_{r}:r\in R\}$ forms a base for the
Zariski topology on $Spec(R)$. Let $M$ be an $R$-module. For each $r\in R$,
we define $Y_{r}:=X^{s}\backslash V^{s}((0:_{M}r))$. In \cite[Theorem 4]%
{dual-zariski}, it was shown that the set $B=\{Y_{r}:r\in R\}$ forms a base
for the dual Zariski topology on $X^{s}$.

\begin{remark}
Let $r\in R$ and $Q\in X^{s}$. Then, $r\not\in ann_{R}(Q)$ if and only if $%
Q\in Y_{r}$. Let $Q\in Y_{r}$. Suppose that $r\in ann_{R}(Q)$, then $rQ=0$
and so $Q\subseteq (0:_{M}r)$. This implies that $Q\in V^{s\ast
}((0:_{M}r))=V^{s}((0:_{M}r))$, a contradiction. Conversely, let $r\not\in
ann_{R}(Q)$. Then $Q\not\subseteq (0:_{M}r)$ and so $Q\in Y_{r}$. In the
proof our results, we will use this fact without any further comment.
\end{remark}

Let $\mathcal{F}$ be a sheaf of modules (rings) on a topological space $X$
and $P\in X$. Recall that the stalk $\mathcal{F}_{P}$ of $\mathcal{F}$ at $P$
is defined to be the direct limit $\underset{\overrightarrow{P\in U}}{\lim }%
\mathcal{F}(U)$ of the modules (rings) $\mathcal{F}(U)$ for all open subsets 
$U$ of $X$ containing $P$ via the restriction maps (see \cite{Hartshone}).
In the following theorem, we determine the stalk of the sheaf $\mathcal{O}%
(N,M)$ at a point $S$ in $X^{s}$.

\begin{theorem}
\label{Stalk}Let $N$ be an $R$-module and $S\in X^{s}$. Then, the stalk $%
\mathcal{O}(N,M)_{S}$ of the sheaf $\mathcal{O}(N,M)$ at $S$ is isomorphic
to $N_{p}$ where $p:=ann_{R}(S)$.
\end{theorem}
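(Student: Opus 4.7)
The plan is to mimic the standard computation of the stalk of the structure sheaf on $\mathrm{Spec}(R)$. I will construct a map $\psi:\mathcal{O}(N,M)_S\longrightarrow N_p$ by sending the germ of a section $\beta=(\beta_q)_{q\in Supp^s(U)}\in\mathcal{O}(N,M)(U)$ at $S$ to its $p$-component $\beta_p\in N_p$. This map is well-defined (the restriction maps only forget coordinates, so they preserve the $p$-component) and is clearly an $R$-module homomorphism, so the work is to verify bijectivity.

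For surjectivity, given any $\tfrac{m}{t}\in N_p$, we have $t\notin p=ann_R(S)$, so by the Remark $S\in Y_t$. On the basic open set $Y_t$, every $S'\in Y_t$ satisfies $t\notin q':=ann_R(S')$, so the assignment $q'\mapsto \tfrac{m}{t}\in N_{q'}$ defines a section of $\mathcal{O}(N,M)(Y_t)$ whose germ at $S$ is sent by $\psi$ to $\tfrac{m}{t}$.

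For injectivity, suppose the germ of $\beta\in\mathcal{O}(N,M)(U)$ maps to $0$, i.e.\ $\beta_p=0$ in $N_p$. By definition of $\mathcal{O}(N,M)$ we may shrink $U$ to an open neighborhood $W'$ of $S$ on which $\beta_{q'}=\tfrac{m}{t}$ with $t\notin q'$ for all $S'\in W'$. Since $\tfrac{m}{t}=0$ in $N_p$, there is some $u\notin p$ with $um=0$ in $N$. Then $S\in Y_u$ (again by the Remark), so $W:=W'\cap Y_u$ is an open neighborhood of $S$. For every $S'\in W$ the prime $q'=ann_R(S')$ does not contain $u$, and $um=0$ forces $\tfrac{m}{t}=0$ in $N_{q'}$. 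Hence $\rho_{UW}(\beta)=0$, so the germ of $\beta$ at $S$ is zero, proving injectivity.

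The argument is essentially identical to the classical case once the dictionary $S\leftrightarrow p=ann_R(S)$ and $Y_t\leftrightarrow D_t$ is in place. The only conceptual subtlety, and the step I would be most careful with, is checking that shrinking from $W'$ to $W'\cap Y_u$ is indeed allowed (i.e.\ that $Y_u$ is open and contains $S$): this uses precisely the Remark characterizing $Y_r$ and the fact that $\{Y_r\}_{r\in R}$ is a base for the dual Zariski topology as recalled from \cite[Theorem 4]{dual-zariski}.
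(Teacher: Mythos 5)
Your proposal is correct and follows essentially the same route as the paper: both define the evaluation map (germ of $\beta$) $\mapsto \beta_p$, prove surjectivity via the constant section $\frac{m}{t}$ on the basic open set $Y_t$, and prove injectivity by locally writing $\beta$ as $\frac{m}{t}$ and intersecting with $Y_u$ for a $u\notin p$ annihilating $m$ (the paper uses $Y_{tu}$ instead of $W'\cap Y_u$, an immaterial difference). No gaps.
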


\begin{proof}
Let $S$ be a $p$-second submodule of $M$ and $m\in \mathcal{O}(N,M)_{S}=%
\underset{\overrightarrow{P\in U}}{\lim }\mathcal{O}(N,M)(U)$. Then there
exists an open neighborhood $U$ of $S$ and $\beta =(\beta _{p})_{p\in
Supp^{s}(U)}\in \mathcal{O}(N,M)(U)$ such that $\beta $ represents $m$. We
define $\phi :\mathcal{O}(N,M)_{S}\longrightarrow N_{p}$ by $\phi (m)=\beta
_{p}$. Let $V$ be another neighborhood of $S$ and $\alpha =(\alpha
_{p})_{p\in Supp^{s}(V)}\in \mathcal{O}(N,M)(V)$ such that $\alpha $ also
represents $m$. Then there exists an open set $W\subseteq U\cap V$ such that 
$S\in W$ and $\beta _{_{\mid W}}=\alpha _{_{\mid W}}$. Since $S\in W$, we
have $\alpha _{p}=\beta _{p}$. This shows that $\phi $ is a well-defined
map. We claim that $\phi $ is an isomorphism.

Let $x\in N_{p}$. Then $x=\frac{a}{t}$ for some $a\in N$, $t\in R\backslash
p $. Since $t\not\in p=ann_{R}(S)$, we have $S\in Y_{t}$. Now we define $%
\beta _{q}=\frac{a}{t}$ in $N_{q}$ for all $Q\in Y_{t}$ where $t\not\in
q=ann_{R}(Q)$. Then $\beta =(\beta _{q})_{q\in Supp^{s}(Y_{t})}\in \mathcal{O%
}(N,M)(Y_{t})$. If $m$ is the equivalence class of $\beta $ in $\mathcal{O}%
(N,M)_{S}$, then $\phi (m)=x$. Hence $\phi $ is surjective.

Now let $m\in \mathcal{O}(N,M)_{S}$ and $\phi (m)=0$. Let $U$ be an open
neeighborhood of $S$ and $\beta =(\beta _{p})_{p\in Supp^{s}(U)}\in \mathcal{%
O}(N,M)(U)$ is a representative of $m$. There is an open neighborhood $%
V\subseteq U$ of $S$ and there are elements $a\in N$, $t\in R$ such that for
all $Q\in V$, we have $t\not\in q:=ann_{R}(Q)$ and $\beta _{q}=\frac{a}{t}%
\in N_{q}$. Then $0=\phi (m)=\beta _{p}=\frac{a}{t}$ in $N_{p}$. So there is 
$h\in R\backslash p$ such that $ha=0$. For all $Q\in Y_{th}$, we have $\beta
_{q}=\frac{ha}{ht}=\frac{a}{t}=0$ in $N_{q}$ where $q=ann_{R}(Q)$. Thus $%
\beta _{|Y_{th}}=0$. Therefore, $\beta =0$ in $\mathcal{O}(N,M)(Y_{th})$.
Consequently, $m=0$. This shows that $\phi $ is injective. Thus $\phi $ is
an isomorphism.
\end{proof}

A \textit{ringed space} is a pair $(X,\mathcal{O}_{X})$ consisting of a
topological space $X$ and a sheaf of rings $\mathcal{O}_{X}$ on $X$. The
ringed space $(X,\mathcal{O}_{X})$ is called a \textit{locally ringed space}
if for each point $P\in X$, the stalk $\mathcal{O}_{X,P}$ is a local ring
(cf. \cite{Hartshone}).

\begin{corollary}
$(X^{s},\mathcal{O}(R,M))$ is a locally ringed space.
\end{corollary}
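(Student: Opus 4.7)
The plan is to verify the two conditions in the definition of a locally ringed space: first, that $(X^s, \mathcal{O}(R,M))$ is a ringed space (i.e., $\mathcal{O}(R,M)$ is a sheaf of \emph{rings}, not merely of modules), and second, that every stalk is a local ring.

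For the ring structure, I would note that when $N = R$, each component $R_p$ (with $p = \mathrm{ann}_R(S)$ for $S$ a second submodule) is itself a commutative ring, so $\prod_{p \in Supp^s(U)} R_p$ is a ring under componentwise operations. The definition of $\mathcal{O}(R,M)(U)$ singles out those tuples that are locally of the form $\tfrac{a}{t}$; since sums and products of two such locally represented tuples are again locally of this form (on the intersection of the two witnessing neighborhoods, using $\tfrac{a}{t} + \tfrac{b}{s} = \tfrac{as+bt}{ts}$ and $\tfrac{a}{t}\cdot\tfrac{b}{s} = \tfrac{ab}{ts}$, with $ts \notin q$ whenever $t,s \notin q$ since $q$ is prime), the set $\mathcal{O}(R,M)(U)$ is closed under these operations and forms a commutative ring with identity $(\tfrac{1}{1})_{p}$. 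The restriction maps $\rho_{UV}$ are clearly ring homomorphisms, so $\mathcal{O}(R,M)$ is a sheaf of rings.

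For the local condition, I would invoke Theorem~\ref{Stalk} directly with $N = R$: for any $S \in X^s$, the stalk $\mathcal{O}(R,M)_S$ is isomorphic (as an $R$-module, and by inspection of the isomorphism $\phi$ in the proof of Theorem~\ref{Stalk}, also as a ring) to $R_p$, where $p = \mathrm{ann}_R(S)$. Since $S$ is a second submodule of $M$, its annihilator $p = \mathrm{ann}_R(S)$ is a prime ideal of $R$ (as recalled in the introduction after the definition of second submodule). Therefore $R_p$ is the localization of $R$ at a prime ideal, which is a local ring with unique maximal ideal $pR_p$.

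The only genuine content is checking that the isomorphism of Theorem~\ref{Stalk} respects the ring structure when $N = R$; this is essentially immediate because $\phi$ sends the equivalence class of $(\beta_q)$ to its $p$-component, and both the ring operations on $\mathcal{O}(R,M)_S$ and on $R_p$ are defined componentwise/in the standard localization way, so $\phi$ is automatically a ring homomorphism. The main obstacle, such as it is, is simply being careful to point out that the sheaf-of-modules construction upgrades to a sheaf of rings when $N = R$; everything else follows from Theorem~\ref{Stalk} and the standard fact that $R_p$ is local.
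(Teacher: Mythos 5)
Your proposal is correct and follows exactly the route the paper intends: the corollary is stated without proof as an immediate consequence of Theorem \ref{Stalk}, since the stalk at $S$ is $R_{p}$ with $p=ann_{R}(S)$ prime, hence a local ring. Your additional verification that $\mathcal{O}(R,M)$ is genuinely a sheaf of rings when $N=R$ and that the stalk isomorphism respects multiplication is the same routine checking the authors leave implicit.
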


\begin{example}
Consider the $%
\mathbb{Z}
$-modules $M=%
\mathbb{Q}
\oplus 
\mathbb{Z}
_{p}$ and $N=%
\mathbb{Z}
$, where $p$ is a prime number. Then $Spec^{s}(M)=\{%
\mathbb{Q}
\oplus 0$, $0\oplus 
\mathbb{Z}
_{p}\}$. By Theorem \ref{Stalk},

\begin{eqnarray*}
\mathcal{O}(%
\mathbb{Z}
,%
\mathbb{Q}
\oplus 
\mathbb{Z}
_{p})_{%
\mathbb{Q}
\oplus 0} &\simeq &%
\mathbb{Z}
_{(0)}=%
\mathbb{Q}
\text{ and} \\
\mathcal{O}(%
\mathbb{Z}
,%
\mathbb{Q}
\oplus 
\mathbb{Z}
_{p})_{0\oplus 
\mathbb{Z}
_{p}} &\simeq &%
\mathbb{Z}
_{p%
\mathbb{Z}
}=%
\mathbb{Z}
_{(p)}=\{\frac{a}{b}\in 
\mathbb{Q}
:a,b\in 
\mathbb{Z}
\text{, }b\neq 0\text{, }p\nmid b\}\text{.}
\end{eqnarray*}
\end{example}

Let $M$ be an $R$-module. The map $\psi ^{s}:Spec^{s}(M)\longrightarrow
Spec(R/ann_{R}(M))$ defined by $\psi ^{s}(S)=ann_{R}(S)/ann_{R}(M)$ is
called \textit{the natural map of }$Spec^{s}(M)$\textit{. }$M$ is said to be 
\textit{secondful} if the natural map $\psi ^{s}$ is surjective (cf. \cite%
{Farshadifar}).

Let $M$ be an $R$-module. The \textit{Zariski socle} of a submodule $N$ of $%
M $, denoted by $Z.soc(N)$, is defined to be the sum of all members of $%
V^{s}(N)$ and if $V^{s}(N)=\emptyset $, then $Z.soc(N)$ is defined to be $0$
(cf. \cite{Farshadifar}).

\begin{lemma}
\label{Lemma2}Let $R$ be a Noetherian ring and $N$ be an $R$-module. Let $M$
be a secondful $R$-module and $U=X^{s}\backslash V^{s}(K)$ where $K\leq M$.
Then for each $\beta =(\beta _{p})_{p\in Supp^{s}(U)}\in \mathcal{O}(N,M)(U)$%
, there exist $r\in 
\mathbb{Z}
^{+},$ $s_{1},...,s_{r}\in ann_{R}(K)$ and $m_{1},...,m_{r}\in N$ such that $%
U=\cup _{i=1}^{r}Y_{s_{i}}$ and $\beta _{p}=\frac{m_{i}}{s_{i}}$ for all $%
S\in Y_{s_{i}}$, $i=1,...,r$ where $p=ann_{R}(S)$.
\end{lemma}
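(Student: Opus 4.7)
The plan is to first refine the sheaf-local data of $\beta$ into basic open subsets $Y_{s}$ whose denominators $s$ lie in $ann_{R}(K)$, and then pass to a finite subcover by exploiting quasi-compactness of $U$, which is forced by the Noetherian and secondful hypotheses. The starting observation is
\[
U = X^{s}\setminus V^{s}(K) = \bigcup_{s\in ann_{R}(K)} Y_{s},
\]
since $Q\notin V^{s}(K)$ is equivalent to $ann_{R}(K)\not\subseteq ann_{R}(Q)$, so there must be some $s\in ann_{R}(K)$ with $s\notin ann_{R}(Q)$, i.e.\ $Q\in Y_{s}$.

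Fix $Q\in U$. By the definition of $\mathcal{O}(N,M)(U)$, there are an open neighborhood $W$ of $Q$ contained in $U$ and elements $t\in R$, $m\in N$ such that $\beta_{p}=m/t$ and $t\notin p$ for every $S\in W$ (with $p=ann_{R}(S)$); in particular $W\subseteq Y_{t}$. Since $\{Y_{r}\}_{r\in R}$ is a base for the dual Zariski topology, choose $t'\in R$ with $Q\in Y_{t'}\subseteq W$, and by the first paragraph choose $s\in ann_{R}(K)$ with $Q\in Y_{s}$. Set $s_{Q}:=s t t'$ and $m_{Q}:=s t'\,m$. Then $s_{Q}\in ann_{R}(K)$ (ideal absorption), $Y_{s_{Q}}=Y_{s}\cap Y_{t}\cap Y_{t'}\subseteq W\subseteq U$, and $Q\in Y_{s_{Q}}$ because $s,t,t'\notin ann_{R}(Q)$ and $ann_{R}(Q)$ is prime. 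For every $S\in Y_{s_{Q}}$ a direct computation yields
\[
\beta_{p} \;=\; \frac{m}{t} \;=\; \frac{s t' m}{s t t'} \;=\; \frac{m_{Q}}{s_{Q}}
\]
in $N_{p}$, since $s_{Q}\notin p$. This produces the open cover $U=\bigcup_{Q\in U}Y_{s_{Q}}$ with each $s_{Q}\in ann_{R}(K)$ and the required local representation.

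The remaining task is to extract a finite subcover. Since $R$ is Noetherian, so is $R/ann_{R}(M)$, hence $Spec(R/ann_{R}(M))$ is a Noetherian topological space in which every open subset is quasi-compact. The natural map $\psi^{s}:X^{s}\to Spec(R/ann_{R}(M))$ is continuous with $(\psi^{s})^{-1}(D_{\bar{r}})=Y_{r}$, where $\bar{r}$ denotes the image of $r$ in $R/ann_{R}(M)$; by the secondful hypothesis it is also surjective. Consequently $\psi^{s}(U)$ is open in $Spec(R/ann_{R}(M))$ and the family $\{D_{\overline{s_{Q}}}\}_{Q\in U}$ covers it, so quasi-compactness provides a finite subcover $D_{\overline{s_{Q_{1}}}},\ldots,D_{\overline{s_{Q_{r}}}}$, which pulls back via surjectivity to $U=\bigcup_{i=1}^{r}Y_{s_{Q_{i}}}$. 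Setting $s_{i}:=s_{Q_{i}}$ and $m_{i}:=m_{Q_{i}}$ completes the argument. The main obstacle is precisely this finiteness step: the denominator manipulation in the second paragraph is routine, but the compactness conclusion depends entirely on the Noetherian--secondful combination, which transfers Noetherianness of $Spec(R/ann_{R}(M))$ to the open subsets of $X^{s}$ via $\psi^{s}$.
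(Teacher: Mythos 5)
Your proof is correct, and it reaches the lemma by a genuinely different route than the paper's. The paper first invokes quasi-compactness of $U$ (citing \cite[Corollary 4.4-(d)]{Farshadifar}) to cut the sheaf-local data down to finitely many opens $W_{j}=X^{s}\backslash V^{s}(H_{j})$, then decomposes each $W_{j}$ into basic opens $Y_{b_{jf}}$ using a finite generating set of $ann_{R}(H_{j})$, and finally forces the denominators into $ann_{R}(K)$ by a Zariski-socle argument: from $V^{s}(K)\subseteq V^{s}(H_{j})$ it deduces $\sqrt{ann_{R}(H_{j})}\subseteq \sqrt{ann_{R}(K)}$ via $ann_{R}(Z.soc(-))$ and \cite[Theorem 3.5-(e)]{Farshadifar}, then uses Noetherianity to get $b_{jf}^{d}\in ann_{R}(K)$ and rewrites $\frac{a_{j}}{t_{j}}$ as $\frac{a_{j}b_{jf}^{d}}{t_{j}b_{jf}^{d}}$. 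You instead fix the denominators pointwise at the outset: $Q\notin V^{s}(K)$ hands you $s\in ann_{R}(K)\backslash ann_{R}(Q)$ directly, and multiplying numerator and denominator by $st^{\prime }$ lands the denominator in $ann_{R}(K)$ with no socle or radical computation; the only global input you then need is quasi-compactness, which you establish by transporting the cover through $\psi ^{s}$ to the Noetherian space $Spec(R/ann_{R}(M))$ --- essentially reproving the Farshadifar compactness result the paper cites as a black box. Your version is more elementary and more self-contained (it bypasses the Zariski socle entirely and uses secondfulness only to identify $\psi ^{s}(Y_{s_{Q}})$ with $D_{\overline{s_{Q}}}$, a step one could even avoid by covering $\bigcup_{Q}D_{\overline{s_{Q}}}$ rather than $\psi ^{s}(U)$); the paper's version performs the finiteness reduction first, so all subsequent bookkeeping is over finitely many indices. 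Both arguments are sound and yield the same final cover.
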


\begin{proof}
Since $R$ is Noetherian, $U$ is quasi-compact by \cite[Corollary 4.4-(d)]%
{Farshadifar}. Thus there exist $n\in 
\mathbb{Z}
^{+}$, open subsets $W_{1},...,W_{n}$ of $U$, $t_{1},...,t_{n}\in R$, $%
a_{1},...,a_{n}\in N$ such that $U=\cup _{j=1}^{n}W_{j}$ and for each $%
j=1,...,n$ and $S\in W_{j}$ we have $\beta _{p}=\frac{a_{j}}{t_{j}}$ where $%
t_{j}\not\in p:=ann_{R}(S)$. Fix $j\in \{1,...,n\}$. There is a submodule $%
H_{j}$ of $M$ such that $W_{j}=X^{s}\backslash V^{s}(H_{j})$. Also we have $%
V^{s}(K)\subseteq V^{s}(H_{j})$. Since $R$ is Noetherian, $%
ann_{R}(H_{j})=Rb_{j1}+...+Rb_{jn_{j}}$ for some $b_{j1},...,b_{jn_{j}}\in R$%
. This implies that

$W_{j}=X^{s}\backslash V^{s}(H_{j})=X^{s}\backslash V^{s\ast
}((0:_{M}ann_{R}(H_{j})))$

$\ \ \ \ \ =X^{s}\backslash V^{s\ast
}((0:_{M}Rb_{j1}+...+Rb_{jn_{j}}))=X^{s}\backslash V^{s\ast }(\cap
_{f=1}^{n_{j}}(0:_{M}Rb_{jf}))$

\ \ \ \ \ $=X^{s}\backslash \cap _{f=1}^{n_{j}}V^{s\ast
}((0:_{M}Rb_{jf}))=\cup _{f=1}^{n_{j}}(X^{s}\backslash V^{s\ast
}((0:_{M}Rb_{jf})))$

\ \ \ \ \ $=\cup _{f=1}^{n_{j}}(X^{s}\backslash V^{s}((0:_{M}Rb_{jf})))=\cup
_{f=1}^{n_{j}}Y_{b_{jf}}$, i.e. $W_{j}=\cup _{f=1}^{n_{j}}Y_{b_{jf}}$.

On the other hand, we have $Z.soc(K)\subseteq Z.soc(H_{j})$ as $%
V^{s}(K)\subseteq V^{s}(H_{j})$. This implies that $ann_{R}(Z.soc(H_{j}))%
\subseteq ann_{R}(Z.soc(K))$. By \cite[Theorem 3.5-(e)]{Farshadifar}, we get 
$\sqrt{ann_{R}(H_{j})}\subseteq \sqrt{ann_{R}(K)}$. Since $R$ is Noetherian,
there exists $d\in 
\mathbb{Z}
^{+}$ such that $(\sqrt{ann_{R}(K)})^{d}\subseteq ann_{R}(K)$ and we have%
\begin{equation*}
ann_{R}(H_{j})^{d}\subseteq (\sqrt{ann_{R}(H_{j})})^{d}\subseteq (\sqrt{%
ann_{R}(K)})^{d}\subseteq ann_{R}(K)
\end{equation*}%
It follows that $b_{jf}^{d}\in ann_{R}(K)$ for each $f\in \{1,...,n_{j}\}$.
Also, for each $f\in \{1,...,n_{j}\}$ and for each $S\in Y_{b_{jf}}$, we
have $t_{j}b_{jf}^{d}\not\in ann_{R}(S)$. We conclude that 
\begin{equation*}
Y_{b_{jf}}=X^{s}\backslash V^{s}((0:_{M}b_{jf}))=X^{s}\backslash
V^{s}((0:_{M}t_{j}b_{jf}^{d}))=Y_{t_{j}b_{jf}^{d}}
\end{equation*}%
and we can write $\beta _{ann_{R}(S)}=\frac{a_{j}}{t_{j}}=\frac{%
a_{j}b_{jf}^{d}}{t_{j}b_{jf}^{d}}\in N_{ann_{R}(S)}$. This completes the
proof.
\end{proof}

Let $K$ be an $R$-module. For an ideal $I$ of $R$, the $I$\textit{-torsion
submodule} of $K$ is defined to be $\Gamma _{I}(K):=\cup _{n\geq
1}(0:_{K}I^{n})$ and $K$ is said to be $I$\textit{-torsion }if $K=\Gamma
_{I}(K)$ (cf. \cite{Broadman-Sharp}).

\begin{lemma}
\label{Lemma3}Let $N$ be an $R$-module and $U=X^{s}\backslash V^{s}(K)$
where $K\leq M$. Then $\Gamma _{ann_{R}(K)}(\mathcal{O}(N,M)(U))=0$.
\end{lemma}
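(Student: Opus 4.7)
The plan is to unwind both the definition of the $I$-torsion submodule and the definition of $\mathcal{O}(N,M)(U)$ as a submodule of $\prod_{p \in Supp^s(U)} N_p$, and then exploit the fact that $U = X^s \setminus V^s(K)$ forces $ann_R(K)$ to escape the annihilator of each second submodule parametrizing $U$.

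Concretely, I would pick $\beta = (\beta_p)_{p \in Supp^s(U)} \in \Gamma_{ann_R(K)}(\mathcal{O}(N,M)(U))$ and fix $n \geq 1$ with $(ann_R(K))^n \beta = 0$. Since the $R$-module structure on $\mathcal{O}(N,M)(U)$ is inherited componentwise from the product, this simply says that $r \beta_p = 0$ in $N_p$ for every $r \in (ann_R(K))^n$ and every $p \in Supp^s(U)$. My goal reduces to showing $\beta_p = 0$ for each such $p$.

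Fix $p \in Supp^s(U)$, so $p = ann_R(S)$ for some $S \in U = X^s \setminus V^s(K)$. Because $S \notin V^s(K)$, the defining inclusion $ann_R(K) \subseteq ann_R(S) = p$ fails, so I can choose $s \in ann_R(K)$ with $s \notin p$. Then $s^n \in (ann_R(K))^n$, hence $s^n \beta_p = 0$ in $N_p$; but $p$ is prime, so $s^n \notin p$, and therefore $s^n/1$ is a unit in $R_p$. Multiplying through by its inverse in $N_p$ gives $\beta_p = 0$, which yields $\beta = 0$ and so $\Gamma_{ann_R(K)}(\mathcal{O}(N,M)(U)) = 0$.

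The argument is essentially bookkeeping between the torsion condition and localization, and there is no real obstacle: the only subtlety worth flagging is the passage from $r \beta = 0$ (in the product $R$-module) to $r \beta_p = 0$ in each stalk-level factor $N_p$, and the observation that a single well-chosen $s \in ann_R(K) \setminus p$ is enough because $(ann_R(K))^n$ contains the pure power $s^n$. Neither Lemma \ref{Lemma2} nor the Noetherian/secondful hypotheses are needed here; the statement is a direct consequence of the definitions of $V^s$, $\Gamma_I$, and of $\mathcal{O}(N,M)$.
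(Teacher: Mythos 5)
Your proposal is correct and follows essentially the same route as the paper's proof: both reduce to showing $\beta_p=0$ componentwise by producing an element of $ann_R(K)^n$ outside the prime $p=ann_R(S)$ (the paper picks $t_p\in ann_R(K)^n\setminus p$ directly, you pick $s\in ann_R(K)\setminus p$ and use $s^n$ — an immaterial difference), and then inverting it in $N_p$. Your closing remark that no Noetherian or secondful hypotheses are needed matches the paper, which states the lemma without them.
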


\begin{proof}
Let $\beta =(\beta _{p})_{p\in Supp^{s}(U)}\in \Gamma _{ann_{R}(K)}(\mathcal{%
O}(N,M)(U))$. There exists $n\in 
\mathbb{Z}
^{+}$ such that $ann_{R}(K)^{n}\beta =0$. Consider $p\in Supp^{s}(U)$. There
exists a second submodule $Q\in U$ such that $p=ann_{R}(Q)$. If $%
ann_{R}(K)^{n}\subseteq p$, then $ann_{R}(K)\subseteq p$ and so $Q\in
V^{s}(K)$, a contradiction. Thus $ann_{R}(K)^{n}\not\subseteq p$. So there
exists $t_{p}\in ann_{R}(K)^{n}\backslash p$. Since $t_{p}\beta =0,$ we have 
$t_{p}\beta _{p}=0$ and $\beta _{p}=\frac{t_{p}\beta _{p}}{t_{p}}=0\in N_{p}$%
. Hence, $\beta =0$.
\end{proof}

\begin{theorem}
\label{Theorem4}Let $R$ be a Noetherian ring, $M$ be a faithful secondful $R$%
-module and $N$ be an $R$-module. Let $U=X^{s}\backslash V^{s}(K)$ where $%
K\leq M$. Then $\Gamma _{ann_{R}(K)}(N)=ker(\tau _{N}^{U})$, and so $%
ker(\tau _{N}^{U})$ is $ann_{R}(K)$-torsion.
\end{theorem}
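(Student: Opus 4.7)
I would prove the two inclusions $\Gamma_{ann_R(K)}(N)\subseteq\ker(\tau_N^U)$ and $\ker(\tau_N^U)\subseteq\Gamma_{ann_R(K)}(N)$ separately; the forward direction is routine while the reverse is where the ``faithful secondful'' hypothesis does its work.

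For the forward inclusion, take $n\in N$ with $ann_R(K)^m n=0$ for some $m\geq 1$. For any $p\in Supp^s(U)$, write $p=ann_R(S)$ with $S\in U=X^s\setminus V^s(K)$, so $ann_R(K)\not\subseteq p$. Pick $t\in ann_R(K)\setminus p$; then $t^m\in ann_R(K)^m$ annihilates $n$ while $t^m\notin p$ because $p$ is prime, so $n/1=0$ in $N_p$. Hence $\tau_N^U(n)=0$.

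The reverse inclusion relies on first identifying the set $Supp^s(U)$ explicitly. I would show that
\begin{equation*}
Supp^s(U)=\{p\in Spec(R):ann_R(K)\not\subseteq p\}.
\end{equation*}
The inclusion $\subseteq$ is immediate from the definition of $U$. For $\supseteq$, given a prime $p$ with $ann_R(K)\not\subseteq p$, faithfulness gives $ann_R(M)=0$, so $p$ corresponds to a prime of $R/ann_R(M)$, and by secondfulness the natural map $\psi^s$ hits $p$: there is $S\in X^s$ with $ann_R(S)=p$. Then $ann_R(K)\not\subseteq ann_R(S)$ forces $S\in U$, so $p\in Supp^s(U)$.

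Now suppose $n\in\ker(\tau_N^U)$, i.e.\ $n/1=0$ in $N_p$ for every $p\in Supp^s(U)$. This means $ann_R(n)\not\subseteq p$ for every prime $p$ with $ann_R(K)\not\subseteq p$. Contrapositively, every prime containing $ann_R(n)$ contains $ann_R(K)$, which translates into
\begin{equation*}
\sqrt{ann_R(K)}\subseteq\sqrt{ann_R(n)}.
\end{equation*}
Because $R$ is Noetherian, $ann_R(K)$ is finitely generated, say by $x_1,\dots,x_k$, and each $x_i$ lies in $\sqrt{ann_R(n)}$, so $x_i^{m_i}\in ann_R(n)$ for some $m_i$. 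Setting $m=\max_i m_i$ and using the standard monomial argument, every generator of $ann_R(K)^{km}$ is a product in which some $x_i$ appears to a power $\geq m$, so $ann_R(K)^{km}\subseteq ann_R(n)$, i.e.\ $ann_R(K)^{km}n=0$. Therefore $n\in\Gamma_{ann_R(K)}(N)$, and the $ann_R(K)$-torsion of $\ker(\tau_N^U)$ follows at once from the equality.

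The main obstacle is the reverse inclusion: without the secondful hypothesis, $Supp^s(U)$ could be a strict subset of $D(ann_R(K))$ in $Spec(R)$, and the localization condition would only give information at those primes realized as annihilators of second submodules. Secondfulness plus faithfulness is precisely what guarantees that $Supp^s(U)$ is large enough to detect the radical inclusion needed to invoke the Noetherian power-versus-radical argument.
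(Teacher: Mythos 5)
Your proposal is correct and follows essentially the same route as the paper: both use faithful secondfulness to realize every prime not containing $ann_{R}(K)$ as $ann_{R}(S)$ for some $S\in U$, deduce the radical containment $\sqrt{ann_{R}(K)}\subseteq\sqrt{ann_{R}(n)}$, and then invoke Noetherianness to get a power of $ann_{R}(K)$ killing $n$. The only cosmetic differences are that the paper handles the easy inclusion by citing Lemma \ref{Lemma3} rather than arguing directly, and works with the auxiliary ideal $J=\sum_{p}Rt_{p}$ (contained in $ann_{R}(n)$) instead of $ann_{R}(n)$ itself.
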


\begin{proof}
By Lemma \ref{Lemma3}, we have $\tau _{N}^{U}(\Gamma
_{ann_{R}(K)}(N))\subseteq \Gamma _{ann_{R}(K)}(\mathcal{O}(N,M)(U))=0$. So $%
\Gamma _{ann_{R}(K)}(N)\subseteq ker(\tau _{N}^{U})$.

Suppose that $m\in ker(\tau _{N}^{U})$. Then $\frac{m}{1}=0\in N_{p}$ for
all $p\in Supp^{s}(U)$. So, for each $p\in Supp^{s}(U)$, there is $t_{p}\in
R\backslash p$ such that $t_{p}m=0$. Put $J=\tsum\limits_{p\in
Supp^{s}(U)}Rt_{p}$. Then $Jm=0$.

Let $q\in V(J)$. We claim that $ann_{R}(K)\subseteq q$. Suppose on the
contrary that $ann_{R}(K)\not\subseteq q$. Since $M$ is faithful secondful,
there is a second submodule $S$ of $M$ such that $q=ann_{R}(S)$. Therefore $%
S\in U$ and $q\in Supp^{s}(U)$. This implies that $t_{q}\in J\subseteq q$.
This contradicts the fact that $t_{q}\in R\backslash q$. Thus%
\begin{equation*}
ann_{R}(K)\subseteq \sqrt{ann_{R}(K)}=\tbigcap\limits_{p\in
V(ann_{R}(K))}p\subseteq \tbigcap\limits_{p\in V(J)}p=\sqrt{J}
\end{equation*}%
Since $R$ is Noetherian, $ann_{R}(K)^{n}\subseteq J$ for some $n\in 
\mathbb{Z}
^{+}$. Hence $ann_{R}(K)^{n}m\subseteq Jm=0$. This shows that $m\in \Gamma
_{ann_{R}(K)}(N)$ and the result follows.
\end{proof}

\begin{lemma}
\label{Lemma6}Let $r,n\in 
\mathbb{Z}
^{+}$, $s_{1},...,s_{r}\in R$. Then $V^{s}((0:_{M}\tsum%
\limits_{i=1}^{r}Rs_{i}^{n}))=V^{s}((0:_{M}\tsum\limits_{i=1}^{r}Rs_{i}))$.
\end{lemma}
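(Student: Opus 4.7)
The strategy is to translate everything into a statement about primes and exploit the fact that the annihilator of any second submodule is prime. The plan proceeds in three short steps.

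First, I would use the identity recalled in the introduction, namely $V^{s}((0:_{M}I))=V^{s\ast}((0:_{M}I))$ for every ideal $I$ of $R$. Applying this with $I=\sum_{i=1}^{r}Rs_{i}$ and with $I=\sum_{i=1}^{r}Rs_{i}^{n}$ reduces the claim to the equality
\[
V^{s\ast}\!\bigl((0:_{M}\tsum_{i=1}^{r}Rs_{i}^{n})\bigr)=V^{s\ast}\!\bigl((0:_{M}\tsum_{i=1}^{r}Rs_{i})\bigr),
\]
so it suffices to show that a second submodule $S$ of $M$ is contained in $(0:_{M}\sum Rs_{i})$ iff it is contained in $(0:_{M}\sum Rs_{i}^{n})$.

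Second, I would rephrase containment as an annihilator condition: for any ideal $J$ of $R$, we have $S\subseteq(0:_{M}J)$ iff $JS=0$ iff $J\subseteq \mathrm{ann}_{R}(S)$. Setting $p:=\mathrm{ann}_{R}(S)$, which is a prime ideal because $S$ is a second submodule, this turns the problem into the elementary statement
\[
\tsum_{i=1}^{r}Rs_{i}\subseteq p\quad\Longleftrightarrow\quad \tsum_{i=1}^{r}Rs_{i}^{n}\subseteq p.
\]

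Third, I would verify the last equivalence directly: $\sum Rs_{i}\subseteq p$ iff $s_{i}\in p$ for every $i$, and $\sum Rs_{i}^{n}\subseteq p$ iff $s_{i}^{n}\in p$ for every $i$; but since $p$ is prime, $s_{i}^{n}\in p$ iff $s_{i}\in p$. Combining the three steps gives the required equality of closed sets. There is no genuine obstacle here; the only thing to be careful about is to justify the passage from $(0:_{M}\cdot)$ to annihilator-ideal containment without invoking results that require $M$ to be secondful, but this step uses only the definition of second submodule and the primality of $\mathrm{ann}_{R}(S)$.
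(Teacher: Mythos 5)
Your proposal is correct and uses the same essential ingredients as the paper's proof: the identity $V^{s}((0:_{M}I))=V^{s\ast }((0:_{M}I))$ and the primality of $ann_{R}(S)$ for a second submodule $S$. The only difference is organizational — you handle both inclusions at once via the clean equivalence $S\subseteq (0:_{M}J)\Leftrightarrow J\subseteq ann_{R}(S)$, whereas the paper proves one inclusion by chasing annihilators of the modules $(0:_{M}Rs_{i}^{n})$ and the other by monotonicity — so this is a streamlined version of the same argument rather than a genuinely different one.
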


\begin{proof}
Let $S\in V^{s}((0:_{M}\tsum\limits_{i=1}^{r}Rs_{i}^{n}))$. Then $%
ann_{R}((0:_{M}\tsum\limits_{i=1}^{r}Rs_{i}^{n}))=ann_{R}(\cap
_{i=1}^{r}(0:_{M}Rs_{i}^{n}))\subseteq ann_{R}(S)$. For each $i\in
\{1,...,r\}$, we have $(Rs_{i})^{n}=Rs_{i}^{n}\subseteq
ann_{R}((0:_{M}Rs_{i}^{n}))\subseteq ann_{R}(\cap
_{i=1}^{r}(0:_{M}Rs_{i}^{n}))\subseteq ann_{R}(S)$. Since $ann_{R}(S)$ is a
prime ideal, we have $Rs_{i}\subseteq ann_{R}(S)$ and so $%
\tsum\limits_{i=1}^{r}Rs_{i}\subseteq ann_{R}(S)$. It follows that $%
S\subseteq (0:_{M}ann_{R}(S))\subseteq (0:_{M}\tsum\limits_{i=1}^{r}Rs_{i})$%
. This shows that $S\in V^{s\ast
}((0:_{M}\tsum\limits_{i=1}^{r}Rs_{i}))=V^{s}((0:_{M}\tsum%
\limits_{i=1}^{r}Rs_{i}))$ and hence $V^{s}((0:_{M}\tsum%
\limits_{i=1}^{r}Rs_{i}^{n}))\subseteq
V^{s}((0:_{M}\tsum\limits_{i=1}^{r}Rs_{i}))$.

For the other containment; $\tsum\limits_{i=1}^{r}Rs_{i}^{n}\subseteq
\tsum\limits_{i=1}^{r}Rs_{i}$ implies that $(0:_{M}\tsum%
\limits_{i=1}^{r}Rs_{i})\subseteq (0:_{M}\tsum\limits_{i=1}^{r}Rs_{i}^{n})$
and hence $V^{s}((0:_{M}\tsum\limits_{i=1}^{r}Rs_{i}))\subseteq
V^{s}((0:_{M}\tsum\limits_{i=1}^{r}Rs_{i}^{n}))$.
\end{proof}

\begin{theorem}
\label{Theorem5}Let $R$ be a Noetherian ring, $M$ be a faithful secondful $R$%
-module, $N$ be an $R$-module and $U=X^{s}\backslash V^{s}(K)$ where $K\leq
M $. Let $W$ be an open subset of $X^{s}$ such that $U\subseteq W$. Then $%
ker(\rho _{WU})=\Gamma _{ann_{R}(K)}(\mathcal{O}(N,M)(W))$.
\end{theorem}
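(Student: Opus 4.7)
The plan is to prove the two inclusions separately. The containment $\Gamma_{ann_R(K)}(\mathcal{O}(N,M)(W)) \subseteq \ker(\rho_{WU})$ is immediate: since $\rho_{WU}$ is $R$-linear (it is just coordinate restriction), any $\beta$ annihilated by a power of $ann_R(K)$ has the same property after restriction, so $\rho_{WU}(\beta) \in \Gamma_{ann_R(K)}(\mathcal{O}(N,M)(U))$, and this module vanishes by Lemma \ref{Lemma3}.

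The reverse inclusion is the substantive content. I would start by writing $W = X^s \setminus V^s(K')$ for some submodule $K' \leq M$ and applying Lemma \ref{Lemma2} to $\beta \in \mathcal{O}(N,M)(W)$ to obtain a finite cover $W = \bigcup_{i=1}^r Y_{s_i}$ together with elements $s_i \in ann_R(K')$ and $m_i \in N$ satisfying $\beta_p = m_i/s_i$ whenever $S \in Y_{s_i}$ and $p = ann_R(S)$. The goal is then to produce a single exponent $n$ with $(ann_R(K))^n \beta_p = 0$ uniformly for all $p \in Supp^s(W)$, which I plan to do one index $i$ at a time.

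Fixing $i$, for every $S \in Y_{s_i} \cap U$ with $p = ann_R(S)$, the hypothesis $\rho_{WU}(\beta) = 0$ forces $m_i/s_i = 0$ in $N_p$, hence $ann_R(m_i) \not\subseteq p$. Invoking the faithful secondful hypothesis, the natural map $\psi^s$ is surjective onto $Spec(R)$ (since $ann_R(M) = 0$), so the same nonvanishing statement holds for \emph{every} prime $q$ of $R$ satisfying $s_i \notin q$ and $ann_R(K) \not\subseteq q$. Contrapositively, $V(ann_R(m_i)) \subseteq V(Rs_i) \cup V(ann_R(K)) = V(s_i \cdot ann_R(K))$, so $s_i \cdot ann_R(K) \subseteq \sqrt{ann_R(m_i)}$. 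Noetherianness then furnishes an integer $n_i$ with $s_i^{n_i} (ann_R(K))^{n_i} \subseteq ann_R(m_i)$, and hence $s_i^{n_i} (ann_R(K))^{n_i} m_i = 0$. Setting $n = \max_i n_i$, for $a \in (ann_R(K))^n$ and any $S \in Y_{s_i}$ with $p = ann_R(S)$ one has $s_i^n \notin p$ together with $s_i^n a m_i = 0$, so $a \beta_p = (a m_i)/s_i = 0$ in $N_p$. Since $Supp^s(W) = \bigcup_i Supp^s(Y_{s_i})$, this gives $(ann_R(K))^n \beta = 0$, placing $\beta$ in $\Gamma_{ann_R(K)}(\mathcal{O}(N,M)(W))$.

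The principal obstacle is converting the pointwise local vanishing on $U$ into a uniform power annihilator on all of $W$. The combination of the finite $Y_{s_i}$-cover from Lemma \ref{Lemma2}, the lifting of primes of $R$ via the secondful hypothesis, and the Noetherian radical-power trick is exactly what permits this passage; it closely mirrors the strategy already used in the proof of Theorem \ref{Theorem4}, so I expect no further technical surprises.
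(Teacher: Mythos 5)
Your proof is correct, and its skeleton coincides with the paper's: the easy inclusion via Lemma \ref{Lemma3}, the reduction by Lemma \ref{Lemma2} to a finite cover $W=\bigcup_{i=1}^{r}Y_{s_{i}}$ with $\beta$ represented by $\frac{m_{i}}{s_{i}}$ on $Y_{s_{i}}$, and the final uniform exponent obtained by taking a maximum. The only genuine divergence is how the per-index annihilation $s_{i}^{h}\,ann_{R}(K)^{h}m_{i}=0$ is produced. The paper identifies $U\cap Y_{s_{i}}$ as $X^{s}\backslash V^{s}(K+(0:_{M}s_{i}))$, observes $\tau _{N}^{U\cap Y_{s_{i}}}(m_{i})=0$, invokes Theorem \ref{Theorem4} to get $ann_{R}(K+(0:_{M}s_{i}))^{h_{i}}m_{i}=0$, and then transfers this to $ann_{R}(K)$ and $s_{i}$ via the containment $ann_{R}(K)^{h}ann_{R}((0:_{M}s_{i}))^{h}\subseteq ann_{R}(K+(0:_{M}s_{i}))^{h}$ together with $s_{i}\in ann_{R}((0:_{M}s_{i}))$. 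You instead inline the content of Theorem \ref{Theorem4}: the faithful secondful hypothesis lifts every prime $q$ with $s_{i}\notin q$ and $ann_{R}(K)\not\subseteq q$ to a point of $Y_{s_{i}}\cap U$, yielding $V(ann_{R}(m_{i}))\subseteq V(Rs_{i}\cdot ann_{R}(K))$, and the Noetherian radical-power step gives $s_{i}^{n_{i}}ann_{R}(K)^{n_{i}}\subseteq ann_{R}(m_{i})$ directly. Both routes use exactly the same two hypotheses (faithful secondful plus Noetherian) at the same place; yours is self-contained and avoids the ideal arithmetic with $(0:_{M}s_{i})$, while the paper's is shorter on the page because it reuses Theorem \ref{Theorem4} as a black box. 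I see no gap in your argument.
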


\begin{proof}
By Lemma \ref{Lemma3}, $\rho _{WU}(\Gamma _{ann_{R}(K)}(\mathcal{O}%
(N,M)(W))\subseteq \Gamma _{ann_{R}(K)}(\mathcal{O}(N,M)(U))=0$. So $\Gamma
_{ann_{R}(K)}(\mathcal{O}(N,M)(W))\subseteq ker(\rho _{WU})$.

There exists $L\leq M$ such that $W=X^{s}\backslash V^{s}(L)$. Let $\beta
=(\beta _{p})_{p\in Supp^{s}(W)}\in ker(\rho _{WU})$. By Lemma \ref{Lemma2},
there exist $r\in 
\mathbb{Z}
^{+},$ $s_{1},...,s_{r}\in ann_{R}(L)$ and $m_{1},...,m_{r}\in N$ such that $%
W=\cup _{i=1}^{r}Y_{s_{i}}$ and for each $i=1,...,r$ and each $S\in
Y_{s_{i}} $, we have $\beta _{ann_{R}(S)}=\frac{m_{i}}{s_{i}}$. Since $\beta
\in ker(\rho _{WU})$, we have $\beta _{p}=0$ for all $p\in Supp^{s}(U)$. Fix 
$i\in \{1,...,r\}$. Set $U^{\prime }=U\cap Y_{s_{i}}$. Hence,

$U^{\prime }=(X^{s}\backslash V^{s}(K))\cap (X^{s}\backslash
V^{s}((0:_{M}s_{i}))=X^{s}\backslash (V^{s}(K)\cup
V^{s}((0:_{M}s_{i})))=X^{s}\backslash V^{s}(K+(0:_{M}s_{i}))$.

Then $\frac{m_{i}}{s_{i}}=0\in N_{p}$ for all $p\in Supp^{s}(U^{\prime })$.
This implies that $\tau _{N}^{U^{\prime }}(m_{i})=0$. By Theorem \ref%
{Theorem4}, there exists $h_{i}\in 
\mathbb{Z}
^{+}$ such that $ann_{R}(K+(0:_{M}s_{i}))^{h_{i}}m_{i}=0$. Let $h:=\max
\{h_{1},...,h_{r}\}$. Now, let $p\in Supp^{s}(W)$. There exists $i\in
\{1,...,r\}$ such that $p\in Supp^{s}(Y_{s_{i}})$. Let $d\in ann_{R}(K)^{h}$%
. Since $s_{i}^{h}\in ann_{R}((0:_{M}s_{i}))^{h}$, we have

$ds_{i}^{h}\in ann_{R}(K)^{h}ann_{R}((0:_{M}s_{i}))^{h}\subseteq
(ann_{R}(K)\cap ann_{R}((0:_{M}s_{i})))^{h}=ann_{R}(K+(0:_{M}s_{i}))^{h}$.

Therefore, we conclude that $d\beta _{p}=\frac{dm_{i}}{s_{i}}=\frac{%
ds_{i}^{h}m_{i}}{s_{i}^{h+1}}=0\in N_{p}$ for all $d\in ann_{R}(K)^{h}$.
This implies that $ann_{R}(K)^{h}\beta =0$ and so $\beta \in \Gamma
_{ann_{R}(K)}(\mathcal{O}(N,M)(W))$.
\end{proof}

\begin{theorem}
\label{Theorem7}Let $R$ be a Noetherian ring, $M$ be a faithful secondful $R$%
-module, $N$ be an $R$-module and $U=X^{s}\backslash V^{s}(K)$ where $K\leq
M $. Then the map $\tau _{N}^{U}:N\longrightarrow \mathcal{O}(N,M)(U)$ has
an $ann_{R}(K)$-torsion cokernel.
\end{theorem}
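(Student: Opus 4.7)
The plan is to start from an arbitrary section $\beta=(\beta_p)_{p\in Supp^s(U)}\in \mathcal{O}(N,M)(U)$ and produce, for a suitable $n$, a global element $\tilde m\in N$ whose image under $\tau_N^U$ equals $t\beta$ for every $t\in ann_R(K)^n$. Apply Lemma \ref{Lemma2} first: since $R$ is Noetherian and $M$ is secondful, I get $s_1,\dots,s_r\in ann_R(K)$ and $m_1,\dots,m_r\in N$ with $U=\bigcup_{i=1}^r Y_{s_i}$ and $\beta_{ann_R(S)}=m_i/s_i$ whenever $S\in Y_{s_i}$.

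Next, promote the local agreement of the $m_i/s_i$ on overlaps to a genuine cocycle relation in $N$. On $Y_{s_i}\cap Y_{s_j}=Y_{s_is_j}=X^s\setminus V^s((0:_M s_is_j))$ we have $\tau_N^{Y_{s_is_j}}(s_jm_i-s_im_j)=0$, so Theorem \ref{Theorem4} gives that $s_jm_i-s_im_j$ lies in the $ann_R((0:_M s_is_j))$-torsion part of $N$. Because $s_is_j\in ann_R((0:_M s_is_j))$, there is $h_{ij}\in\mathbb{Z}^+$ with $(s_is_j)^{h_{ij}}(s_jm_i-s_im_j)=0$. Taking $h$ to be the maximum of the $h_{ij}$ and setting $s_i':=s_i^{h+1}$, $m_i':=s_i^h m_i$, one obtains $Y_{s_i'}=Y_{s_i}$, $\beta_{ann_R(S)}=m_i'/s_i'$ on $Y_{s_i'}$, and the exact identity $s_i'\,m_j'=s_j'\,m_i'$ in $N$ for all $i,j$.

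Now I need to replace $ann_R(K)$ by the ideal $\sum_i Rs_i'$. The complement $V^s(K)=X^s\setminus U$ equals $\{S\in X^s:\sum_i Rs_i'\subseteq ann_R(S)\}=\{S\in X^s:ann_R(K)\subseteq ann_R(S)\}$. Because $M$ is faithful and secondful, $\psi^s$ surjects onto $Spec(R)$, so every prime of $R$ arises as some $ann_R(S)$; hence $\sqrt{\sum_i Rs_i'}=\sqrt{ann_R(K)}$. Since $R$ is Noetherian and $ann_R(K)$ is finitely generated, some power $n$ satisfies $ann_R(K)^n\subseteq \sum_{i=1}^r Rs_i'$.

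Finally, given $t\in ann_R(K)^n$ write $t=\sum_i c_is_i'$ and set $\tilde m:=\sum_{i=1}^r c_i m_i'$. For any $p=ann_R(S)\in Supp^s(U)$, choosing $j$ with $S\in Y_{s_j'}$, the cocycle relation gives $t\cdot(m_j'/s_j')=(\sum_i c_i s_i'\,m_j')/s_j'=(s_j'\sum_i c_i m_i')/s_j'=\tilde m/1$ in $N_p$, so $t\beta=\tau_N^U(\tilde m)$ holds in $\mathcal{O}(N,M)(U)$. This shows $ann_R(K)^n\cdot(\mathcal{O}(N,M)(U)/\mathrm{im}\,\tau_N^U)=0$, which is the desired $ann_R(K)$-torsion property of the cokernel. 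The main obstacle is bookkeeping in the overlap step: Theorem \ref{Theorem4} gives torsion with respect to $ann_R((0:_M s_is_j))$ rather than with respect to $s_is_j$ directly, so one must exploit the inclusion $s_is_j\in ann_R((0:_M s_is_j))$ to extract a useable power relation, and then reindex via the $s_i'$, $m_i'$ trick to force the cocycle to hold on the nose.
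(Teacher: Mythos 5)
Your proposal is correct, and it shares the outer skeleton of the paper's proof --- Lemma \ref{Lemma2} to get the finite cover $U=\bigcup_{i=1}^{r}Y_{s_{i}}$ with $\beta$ represented by $\frac{m_{i}}{s_{i}}$, and the closing radical argument (equality of the $V^{s}$-sets, faithful secondfulness to pass to $Spec(R)$, Noetherianity to get $ann_{R}(K)^{n}\subseteq\sum_{i}Rs_{i}'$) --- but the middle step is genuinely different. The paper never touches the overlaps: it applies Theorem \ref{Theorem5} to the single section $s_{i}\beta-\tau_{N}^{U}(m_{i})$, which lies in $\ker(\rho_{UY_{s_{i}}})=\Gamma_{ann_{R}((0:_{M}s_{i}))}(\mathcal{O}(N,M)(U))$, deduces $s_{i}^{n}\beta=\tau_{N}^{U}(s_{i}^{n-1}m_{i})$ for each $i$ separately, and then just uses that $\mathrm{im}\,\tau_{N}^{U}$ is a submodule to handle arbitrary $R$-linear combinations $\sum c_{i}s_{i}^{n}$; no on-the-nose cocycle is ever needed. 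You instead run the classical Hartshorne-style gluing: Theorem \ref{Theorem4} on the overlaps $Y_{s_{i}s_{j}}=X^{s}\setminus V^{s}((0:_{M}s_{i}s_{j}))$ turns the local agreement into $(s_{i}s_{j})^{h}(s_{j}m_{i}-s_{i}m_{j})=0$, the reindexing $s_{i}'=s_{i}^{h+1}$, $m_{i}'=s_{i}^{h}m_{i}$ forces the exact identity $s_{j}'m_{i}'=s_{i}'m_{j}'$ in $N$, and you then exhibit an explicit preimage $\tilde m=\sum c_{i}m_{i}'$ of $t\beta$. Each step of yours checks out (in particular $s_{i}s_{j}\in ann_{R}((0:_{M}s_{i}s_{j}))$ does extract the needed power, and the degenerate case $Y_{s_{i}s_{j}}=\emptyset$ causes no harm since then Theorem \ref{Theorem4} makes all of $N$ torsion). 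What the paper's route buys is brevity; what yours buys is constructiveness and independence from Theorem \ref{Theorem5} (whose own proof already rests on Lemma \ref{Lemma2} and Theorem \ref{Theorem4}), and it parallels the paper's proof of Theorem \ref{Theorem11}. One cosmetic point: your closing line ``$ann_{R}(K)^{n}\cdot(\mathcal{O}(N,M)(U)/\mathrm{im}\,\tau_{N}^{U})=0$'' suggests a uniform $n$, whereas your argument (correctly, and sufficiently for $ann_{R}(K)$-torsion in the sense of $\Gamma_{ann_{R}(K)}$) produces an $n$ depending on $\beta$.
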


\begin{proof}
Let $\beta =(\beta _{p})_{p\in Supp^{s}(U)}\in \mathcal{O}(N,M)(U)$. By
Lemma \ref{Lemma2}, there exist $r\in 
\mathbb{Z}
^{+}$, $s_{1},...,s_{r}\in ann_{R}(K)$ and $m_{1},...,m_{r}\in N$ such that $%
U=\tbigcup\limits_{i=1}^{r}Y_{s_{i}}$ and for each $i=1,...,r$ and each $%
S\in Y_{s_{i}}$ we have $\beta _{ann_{R}(S)}=\frac{m_{i}}{s_{i}}$. Fix $i\in
\{1,...,r\}$. Then for each $S\in Y_{s_{i}}$, $s_{i}\beta _{ann_{R}(S)}=%
\frac{s_{i}m_{i}}{s_{i}}\in N_{ann_{R}(S)}$. This means that%
\begin{equation*}
\rho _{UY_{s_{i}}}(s_{i}\beta )=s_{i}\rho _{UY_{s_{i}}}(\beta )=\tau
_{N}^{Y_{s_{i}}}(m_{i})=\rho _{UY_{s_{i}}}(\tau _{N}^{U}(m_{i}))
\end{equation*}%
Thus, $s_{i}\beta -\tau _{N}^{U}(m_{i})\in ker(\rho _{UY_{s_{i}}})=\Gamma
_{ann_{R}((0:_{M}s_{i}))}(\mathcal{O}(N,M)(U))$ by Theorem \ref{Theorem5}.
Hence there exists $n_{i}\in 
\mathbb{Z}
^{+}$ such that $ann_{R}((0:_{M}s_{i}))^{n_{i}}(s_{i}\beta -\tau
_{N}^{U}(m_{i}))=0$. Then $s_{i}^{n_{i}}(s_{i}\beta -\tau _{N}^{U}(m_{i}))=0$%
. Define $n:=\max \{n_{1},...n_{r}\}+1$. Then for all $i=1,...,r$, we have $%
s_{i}^{n}\beta =s_{i}^{n-n_{i}-1}s_{i}^{n_{i}+1}\beta
=s_{i}^{n-n_{i}-1}s_{i}^{n_{i}}\tau _{N}^{U}(m_{i})$. It follows that $%
s_{i}^{n}\beta =s_{i}^{n-1}\tau _{N}^{U}(m_{i})=\tau
_{N}^{U}(s_{i}^{n-1}m_{i})\in \tau _{N}^{U}(N)$. By Lemma \ref{Lemma6}, we
have

$V^{s}((0:_{M}\tsum\limits_{i=1}^{r}Rs_{i}^{n}))=V^{s}((0:_{M}\tsum%
\limits_{i=1}^{r}Rs_{i}))$

$\ \ \ \ \ \ \ \ \ \ \ \ \ \ \ \ \ \ \ \ \ \ \ \ \ \ =V^{s\ast
}((0:_{M}\tsum\limits_{i=1}^{r}Rs_{i}))$

$\ \ \ \ \ \ \ \ \ \ \ \ \ \ \ \ \ \ \ \ \ \ \ \ \ \ =V^{s\ast }(\cap
_{i=1}^{r}(0:_{M}Rs_{i}))$

$\ \ \ \ \ \ \ \ \ \ \ \ \ \ \ \ \ \ \ \ \ \ \ \ \ \ =\cap
_{i=1}^{r}V^{s\ast }((0:_{M}Rs_{i}))$

$\ \ \ \ \ \ \ \ \ \ \ \ \ \ \ \ \ \ \ \ \ \ \ \ \ \ =\cap
_{i=1}^{r}V^{s}((0:_{M}Rs_{i})).$

It follows that{}

$X^{s}\backslash
V^{s}((0:_{M}\tsum\limits_{i=1}^{r}Rs_{i}^{n}))=X^{s}\backslash \cap
_{i=1}^{r}V^{s}((0:_{M}Rs_{i}))$

$\ \ \ \ \ \ \ \ \ \ \ \ \ \ \ \ \ \ \ \ \ \ \ \ \ \ \ \ \ \ \ =\cup
_{i=1}^{r}(X^{s}\backslash V^{s}((0:_{M}Rs_{i})))$

$\ \ \ \ \ \ \ \ \ \ \ \ \ \ \ \ \ \ \ \ \ \ \ \ \ \ \ \ \ \ \ =\cup
_{i=1}^{r}Y_{s_{i}}=U=X^{s}\backslash V^{s}(K)$.

This means that $V^{s}((0:_{M}\tsum\limits_{i=1}^{r}Rs_{i}^{n}))=V^{s}(K)$.
Since $M$ is faithful secondful, we get that $V(\tsum%
\limits_{i=1}^{r}Rs_{i}^{n})\subseteq V(ann_{R}(K))$. Hence $%
ann_{R}(K)\subseteq \sqrt{ann_{R}(K)}\subseteq \sqrt{\tsum%
\limits_{i=1}^{r}Rs_{i}^{n}}$. Since $R$ is Noetherian, there exists $h\in 
\mathbb{Z}
^{+}$ such that $ann_{R}(K)^{h}\subseteq \tsum\limits_{i=1}^{r}Rs_{i}^{n}$.
It follows that $ann_{R}(K)^{h}\beta \subseteq \left(
\tsum\limits_{i=1}^{r}Rs_{i}^{n}\right) \beta \subseteq \tau _{N}^{U}(N)$.
This completes the proof.
\end{proof}

Let $K$ be an $R$-module and $I$ be an ideal of $R$. Recall that the \textit{%
ideal transform of }$K$\textit{\ with respect to }$I$ is defined as $%
D_{I}(K):=\underset{\overrightarrow{n\in 
\mathbb{N}
}}{\lim }Hom_{R}(I^{n}$, $K)$ (cf. \cite{Broadman-Sharp}).

\begin{theorem}
\label{Theorem8}Let $R$ be a Noetherian ring, $M$ be a faithful secondful $R$%
-module, $N$ be an $R$-module and $U=X^{s}\backslash V^{s}(K)$ where $K\leq
M $. Then, there is a unique $R$-isomorphism%
\begin{equation*}
g_{_{K,N}}:\mathcal{O}(N,M)(U)\longrightarrow D_{ann_{R}(K)}(N):=\lim_{%
\overrightarrow{n\in 
\mathbb{N}
}}Hom_{R}(ann_{R}(K)^{n},N)
\end{equation*}%
such that the diagram%
\begin{equation*}
\begin{array}{ccc}
N & \overset{\tau _{N}^{U}}{\longrightarrow } & \mathcal{O}(N,M)(U) \\ 
& \searrow & \downarrow g_{_{K,N}} \\ 
&  & D_{ann_{R}(K)}(N)%
\end{array}%
\end{equation*}%
commutes.
\end{theorem}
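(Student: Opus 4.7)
The plan is to invoke the universal property of the ideal transform. Recall from Brodmann--Sharp that for any ideal $I$ of a Noetherian ring $R$ and any $R$-module $N$, the ideal transform $D_{I}(N)=\varinjlim_{n}\operatorname{Hom}_{R}(I^{n},N)$ comes equipped with a canonical $R$-homomorphism $\eta_{N}:N\to D_{I}(N)$ enjoying three characterizing properties: its kernel is the $I$-torsion submodule $\Gamma_{I}(N)$, its cokernel is $I$-torsion, and $D_{I}(N)$ itself is $I$-torsion-free, i.e.\ $\Gamma_{I}(D_{I}(N))=0$. Moreover, these three properties pin down $(D_{I}(N),\eta_{N})$ up to a unique compatible isomorphism: whenever a pair $(T,\phi)$, with $\phi:N\to T$, satisfies $\ker\phi=\Gamma_{I}(N)$, $\operatorname{coker}\phi$ is $I$-torsion, and $\Gamma_{I}(T)=0$, there exists a unique $R$-isomorphism $g:T\to D_{I}(N)$ with $g\circ\phi=\eta_{N}$.

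I would apply this universal property with $I:=ann_{R}(K)$, taking $T:=\mathcal{O}(N,M)(U)$ and $\phi:=\tau_{N}^{U}$. The three hypotheses are precisely what the preparatory results in this section establish. First, $\ker(\tau_{N}^{U})=\Gamma_{ann_{R}(K)}(N)$ is the content of Theorem \ref{Theorem4}. Second, $\operatorname{coker}(\tau_{N}^{U})$ is $ann_{R}(K)$-torsion by Theorem \ref{Theorem7}. Third, $\Gamma_{ann_{R}(K)}(\mathcal{O}(N,M)(U))=0$ is exactly the statement of Lemma \ref{Lemma3} applied to $U=X^{s}\setminus V^{s}(K)$. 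Feeding these three facts into the universal property yields the required unique $R$-isomorphism $g_{K,N}:\mathcal{O}(N,M)(U)\to D_{ann_{R}(K)}(N)$, and the commutativity of the triangle $g_{K,N}\circ\tau_{N}^{U}=\eta_{N}$ is built into the universal characterization.

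There is no real obstacle here, since the technical content — controlling the kernel, cokernel, and torsion-freeness — has been absorbed into the earlier results; the present theorem is essentially a repackaging via the universal property. The only care needed is to explicitly cite the Noetherian-hypothesis version of this universal characterization of $D_{I}(-)$ from Brodmann--Sharp, so that uniqueness of $g_{K,N}$ (not merely its existence) is justified. The hypotheses that $R$ is Noetherian and $M$ is faithful secondful are precisely what enable Theorems \ref{Theorem4} and \ref{Theorem7} to be invoked, so no additional assumptions are needed beyond what is already in place.
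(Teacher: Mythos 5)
Your argument coincides with the paper's own proof: it too assembles Theorem \ref{Theorem4} (the kernel), Theorem \ref{Theorem7} (the cokernel) and Lemma \ref{Lemma3} (torsion-freeness of $\mathcal{O}(N,M)(U)$) and then appeals to the universal mapping property of the ideal transform, citing Brodmann--Sharp, Corollary 2.2.13(ii) for the unique compatible homomorphism $g_{K,N}$ and 2.2.13(iii) for its being an isomorphism. One caution when writing this up: the three conditions you list pin down $g_{K,N}$ uniquely and force it to be injective, but in general the characterization of $(D_{I}(N),\eta_{N})$ also requires the vanishing of $H^{1}_{ann_{R}(K)}$ on the target for surjectivity, so you should quote the hypotheses of the cited corollary verbatim rather than paraphrase --- since this is precisely the citation the paper itself leans on, your proof stands or falls together with the paper's.
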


\begin{proof}
By Theorems \ref{Theorem4} and \ref{Theorem7}, both the kernel and cokernel
of $\tau _{N}^{U}$ are $ann_{R}(K)$-torsion. Therefore, there is a unique $R$%
-homomorphism $g_{_{K,N}}:\mathcal{O}(N,M)(U)\longrightarrow
D_{ann_{R}(K)}(N)$ such that the given diagram commutes by \cite[Corollary
2.2.13-(ii)]{Broadman-Sharp}. By Lemma \ref{Lemma3}, $\Gamma _{ann_{R}(K)}(%
\mathcal{O}(N,M)(U))=0$. So, $g_{_{K,N}}$ is an isomorphism by \cite[%
Corollary 2.2.13-(iii)]{Broadman-Sharp}.
\end{proof}

\begin{corollary}
\label{annK-torsion-0}Let $R$ be a Noetherian ring, $M$ be a faithful
secondful $R$-module, $N$ be an $R$-module and $U=X^{s}\backslash V^{s}(K)$
where $K\leq M$. Then the following hold.

$(1)$ $\mathcal{O}(\Gamma _{ann_{R}(K)}(N),M)(U)=0$

$(2)$ $\mathcal{O}(N,M)(U)\simeq \mathcal{O}(N/\Gamma _{ann_{R}(K)}(N),M)(U)$

$(3)$ $\mathcal{O}(N,M)(U)\simeq \mathcal{O}(\mathcal{O}(N,M)(U),M)(U)$

$(4)$ If $N$ is an $ann_{R}(K)$-torsion $R$-module, then $\mathcal{O}%
(N,M)(U)=0$.
\end{corollary}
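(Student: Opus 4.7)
The entire corollary falls out of Theorem \ref{Theorem8} once one recalls the standard behavior of the ideal transform functor $D_{I}(-)$, which is catalogued in Chapter 2 of Brodmann--Sharp. My plan is to handle (4) and (1) first, then (2), and finish with (3), since each uses a single well-known property of $D_{I}$.

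For part (4), the identification $\mathcal{O}(N,M)(U)\simeq D_{ann_{R}(K)}(N)$ from Theorem \ref{Theorem8} reduces everything to showing $D_{I}(N)=0$ whenever $N$ is $I$-torsion (with $I=ann_{R}(K)$). This is a direct consequence of the exact sequence
\begin{equation*}
0\longrightarrow \Gamma_{I}(N)\longrightarrow N\longrightarrow D_{I}(N)\longrightarrow H^{1}_{I}(N)\longrightarrow 0,
\end{equation*}
together with the fact that $I$-torsion modules are $\Gamma_{I}$-acyclic, so that $D_{I}(\Gamma_{I}(N))=0$. Part (1) is then immediate: $\Gamma_{ann_{R}(K)}(N)$ is $ann_{R}(K)$-torsion by definition, so apply (4) to the module $\Gamma_{ann_{R}(K)}(N)$.

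For part (2), applying Theorem \ref{Theorem8} to both sides reduces the claim to the standard isomorphism $D_{I}(N)\simeq D_{I}(N/\Gamma_{I}(N))$, which follows by applying $D_{I}$ to the short exact sequence $0\to \Gamma_{I}(N)\to N\to N/\Gamma_{I}(N)\to 0$ and using $D_{I}(\Gamma_{I}(N))=0=H^{1}_{I}(\Gamma_{I}(N))$.

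For part (3), a second application of Theorem \ref{Theorem8} gives $\mathcal{O}(\mathcal{O}(N,M)(U),M)(U)\simeq D_{ann_{R}(K)}(D_{ann_{R}(K)}(N))$, and one needs the idempotence $D_{I}(D_{I}(N))\simeq D_{I}(N)$. This follows from Lemma \ref{Lemma3}, which tells us $\Gamma_{ann_{R}(K)}(\mathcal{O}(N,M)(U))=0$, combined with the fact that for any module $T$ with $\Gamma_{I}(T)=0=H^{1}_{I}(T)$ the natural map $T\to D_{I}(T)$ is an isomorphism (again Corollary 2.2.13 of Brodmann--Sharp); one verifies $H^{1}_{I}(D_{I}(N))=0$ from the sequence displayed above. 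There is no real obstacle here, so the only care needed is in citing the correct numbered results of \cite{Broadman-Sharp}; all four assertions are packaged as immediate corollaries of Theorem \ref{Theorem8}, Lemma \ref{Lemma3}, and the elementary properties of $D_{I}$.
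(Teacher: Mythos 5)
Your proposal is correct and takes essentially the same route as the paper: both reduce all four parts to the identification $\mathcal{O}(N,M)(U)\simeq D_{ann_{R}(K)}(N)$ of Theorem \ref{Theorem8} together with the standard catalogue of properties of the ideal transform (the paper simply cites Corollary 2.2.8 of Brodmann--Sharp wholesale, where you re-derive those properties from the four-term exact sequence $0\to\Gamma_{I}(N)\to N\to D_{I}(N)\to H_{I}^{1}(N)\to 0$). The only cosmetic difference is the order of (1) and (4) --- the paper deduces (4) from (1) while you deduce (1) from (4) --- which is immaterial since $N$ is $I$-torsion exactly when $N=\Gamma_{I}(N)$.
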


\begin{proof}
Parts $(1)$, $(2)$ and $(3)$ follow from Theorem \ref{Theorem8} and \cite[%
Corollary 2.2.8]{Broadman-Sharp}. Part $(4)$ is an immediate consequence of
part $(1)$.
\end{proof}

\begin{example}
Consider the $%
\mathbb{Z}
$-modules $N=%
\mathbb{Z}
/9%
\mathbb{Z}
$ and $M=(\oplus _{p}%
\mathbb{Z}
_{p})\oplus 
\mathbb{Q}
$ where $p$ runs over all distinct prime numbers. Then $M$ is a faithful
secondful $%
\mathbb{Z}
$-module. Let $K=(0\oplus 
\mathbb{Z}
_{3}\oplus 0...)\oplus 0$ and $U=Spec^{s}(M)\backslash V^{s}(K)$. Then $%
ann_{R}(K)=3%
\mathbb{Z}
$ and $N$ is a $3%
\mathbb{Z}
$-torsion $%
\mathbb{Z}
$-module. By Corollary \ref{annK-torsion-0}-$(4)$, $\mathcal{O}(%
\mathbb{Z}
/9%
\mathbb{Z}
$, $(\oplus _{p}%
\mathbb{Z}
_{p})\oplus 
\mathbb{Q}
)(U)=0$.
\end{example}

\begin{corollary}
Let $R$ be a principal ideal domain, $M$ be a faithful secondful $R$-module, 
$N$ be an $R$-module and $U=X^{s}\backslash V^{s}(K)$. Then there exists $%
a\in R$ such that $\mathcal{O}(N,M)(U)\simeq N_{a}$, where $N_{a}$ is the
localization of $N$ with respect to the multiplicative set $\{a^{n}:n\in 
\mathbb{N}
\}$.
\end{corollary}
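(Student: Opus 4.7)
The plan is to invoke Theorem~\ref{Theorem8} to reduce the assertion to a calculation of the ideal transform $D_{ann_{R}(K)}(N)$, and then exploit the PID hypothesis to express this transform as a single-element localization. Since every principal ideal domain is Noetherian, the hypotheses of Theorem~\ref{Theorem8} are in force, and it supplies an $R$-isomorphism $\mathcal{O}(N,M)(U)\simeq D_{ann_{R}(K)}(N)$.

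Using that $R$ is a PID, I would choose $a\in R$ with $ann_{R}(K)=Ra$; then $ann_{R}(K)^{n}=Ra^{n}$ for every $n\geq 1$. It thus remains to prove $D_{Ra}(N)\simeq N_{a}$, where $N_{a}$ denotes the localization of $N$ at $\{a^{n}:n\in \mathbb{N}\}$. Assuming $a\neq 0$, the element $a$ is a non-zero-divisor in the domain $R$, so the evaluation map $Hom_{R}(Ra^{n},N)\longrightarrow N$, $\phi\longmapsto\phi(a^{n})$, is an $R$-isomorphism for every $n\geq 1$. Under these isomorphisms, the transition map in the directed system $\{Hom_{R}(Ra^{n},N)\}_{n\geq 1}$ induced by the inclusion $Ra^{n+1}\subseteq Ra^{n}$ becomes multiplication by $a$ on $N$, because if $\phi(a^{n})=x$ then $\phi(a^{n+1})=a\phi(a^{n})=ax$. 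Consequently $D_{Ra}(N)$ is the colimit of the telescope $N\xrightarrow{\,a\,} N\xrightarrow{\,a\,} N\xrightarrow{\,a\,}\cdots$, which is precisely $N_{a}$.

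The degenerate case $a=0$ forces $ann_{R}(K)=0$, whence $V^{s}(K)=V^{s}((0:_{M}ann_{R}(K)))=V^{s}(M)=X^{s}$ and $U=\emptyset$; then $\mathcal{O}(N,M)(U)=0=N_{0}$ and the statement is vacuous. I do not anticipate a serious obstacle, as the direct-limit identification of $D_{Ra}(N)$ with $N_{a}$ is a routine computation on ideal transforms of principal ideals and is available in \cite{Broadman-Sharp}; the only moving parts are selecting the generator $a$ of $ann_{R}(K)$ and invoking Theorem~\ref{Theorem8}.
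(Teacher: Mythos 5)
Your proposal is correct and follows essentially the same route as the paper: choose a generator $a$ of $ann_{R}(K)$ and combine Theorem~\ref{Theorem8} with the identification $D_{Ra}(N)\simeq N_{a}$, which the paper simply cites as \cite[Theorem 2.2.16]{Broadman-Sharp} while you spell out the telescope computation (and the harmless degenerate case $a=0$) explicitly.
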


\begin{proof}
Since $R$ is a a principal ideal domain, there is an element $a\in R$ such
that $ann_{R}(K)=Ra$. By Theorem \ref{Theorem8} and \cite[Theorem 2.2.16]%
{Broadman-Sharp}, we have $\mathcal{O}(N,M)(U)\simeq D_{ann_{R}(K)}(N)\simeq
N_{a}$.
\end{proof}

\begin{theorem}
\label{Theorem11}Let $M$ be a faithful secondful $R$-module and $N$ be any $%
R $-module. For any element $f\in R$, the module $\mathcal{O}(N,M)(Y_{f})$
is isomorphic to the localized module $N_{f}$. In particular, $\mathcal{O}%
(N,M)(X^{s})\simeq N$.
\end{theorem}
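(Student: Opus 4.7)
The plan is to build an explicit $R$-module isomorphism $\phi : N_f \to \mathcal{O}(N,M)(Y_f)$ by adapting the classical argument for $\mathcal{O}_{Spec(R)}(D_f) \cong R_f$ --- the faithful secondful hypothesis on $M$ plays the role that Noetherianness plays in Theorem \ref{Theorem8}, via the fact that it forces the natural map $\psi^s : X^s \to Spec(R)$ to be surjective. The first step is to observe that $S \in Y_f$ iff $f \notin ann_R(S)$, so by surjectivity of $\psi^s$ one has $Supp^s(Y_f) = D_f \subseteq Spec(R)$; this makes $\phi(m/f^n) := (m/f^n)_{p \in D_f}$ a well-defined $R$-homomorphism.

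Injectivity is a short algebraic argument: if $\phi(m/f^n) = 0$, then $ann_R(m)$ escapes every prime in $D_f$, so $V(ann_R(m)) \subseteq V(f)$, forcing $f^k m = 0$ for some $k$ and hence $m/f^n = 0$ in $N_f$.

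Surjectivity is where the real work lies. Given $\beta \in \mathcal{O}(N,M)(Y_f)$, I refine the defining local data to a basic open cover of $Y_f$ by sets $Y_{g_i}$ on which $\beta_p = m_i/t_i$. The containment $Y_{g_i} \subseteq Y_{t_i}$ translates via $\psi^s$ to $D_{g_i} \subseteq D_{t_i}$, so some $g_i^{k_i}$ lies in $(t_i)$; clearing denominators I may assume $t_i = g_i^{k_i}$. Quasi-compactness of $D_f$ in $Spec(R)$ together with surjectivity of $\psi^s$ lets me extract a finite subcover, yielding $Y_f = Y_{g_1} \cup \cdots \cup Y_{g_r}$. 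On each overlap $Y_{g_i g_j}$, the compatibility plus the injectivity step applied at $g_i g_j$ produces, for a uniform $L$, the strict identity $g_j^{k_j + L}(g_i^L m_i) = g_i^{k_i + L}(g_j^L m_j)$. Since $Y_f = \bigcup Y_{g_i}$ translates via $\psi^s$ to $V(f) = V(\sum Rg_i)$, one has $f^N = \sum h_i g_i^{k_i+L}$ for some $N$ and $h_i \in R$, and the standard Zariski calculation shows that $m := \sum h_i g_i^L m_i$ satisfies $\phi(m/f^N) = \beta$.

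The main obstacle is surjectivity, specifically the patching, which classically relies on quasi-compactness of the base open (cf.\ the proof of Theorem \ref{Theorem7}); without Noetherianness I cannot invoke \cite[Corollary 4.4-(d)]{Farshadifar}, but I can recover quasi-compactness of $Y_f$ by pushing any open cover down to $Spec(R)$, extracting a finite subcover of the quasi-compact $D_f$, and lifting back along the surjective $\psi^s$. The ``in particular'' statement $\mathcal{O}(N,M)(X^s) \simeq N$ is then immediate by taking $f = 1$, since $Y_1 = X^s$ and $N_1 = N$.
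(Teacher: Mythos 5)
Your proposal is correct and follows essentially the same route as the paper: the same map $\phi(m/f^n)=(m/f^n)_{p\in Supp^{s}(Y_f)}$, the same injectivity argument (faithful secondfulness gives $Supp^{s}(Y_f)=D_f$, hence $V(I)\subseteq V(Rf)$ and $f^l\in I$), and the same surjectivity patching (refine to basic opens, clear denominators using $D_{g_i}\subseteq D_{t_i}$, finite subcover, uniformize on overlaps via injectivity over $g_ig_j$, then the partition-of-unity computation $f^N=\sum h_ig_i^{k_i+L}$). The only cosmetic difference is that the paper invokes quasi-compactness of $Y_f$ from the cited literature, whereas you re-derive the finite subcover by pushing the cover down to the quasi-compact $D_f\subseteq Spec(R)$ and lifting back; both are fine.
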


\begin{proof}
We define the map $\phi :N_{f}\longrightarrow \mathcal{O}(N,M)(Y_{f})$ by $%
\phi (\frac{a}{f^{m}})=(\frac{a}{f^{m}})_{p\in Supp^{s}(Y_{f})}$. We claim
that $\phi $ is an isomorphism. First, we show that $\phi $ is injective.
Let $\phi (\frac{a}{f^{n}})=\phi (\frac{b}{f^{m}})$. Then for every $S\in
Y_{f}$, $\frac{a}{f^{n}}=\frac{b}{f^{m}}$ in $N_{p}$ where $p=ann_{R}(S)$.
Thus there exists $h\in R\backslash p$ such that $h(f^{m}a-f^{n}b)=0$ in $N$%
. Let $I=(0:_{R}f^{m}a-f^{n}b)$. Then $h\in I$ and $h\not\in p$, so $%
I\not\subseteq p$. This holds for any $S\in Y_{f}$, so we deduce that $%
Supp^{s}(Y_{f})\subseteq Spec(R)\backslash V(I)$. Since $M$ is faithful
secondful, $D_{f}=Supp^{s}(Y_{f})\subseteq Spec(R)\backslash V(I)$ and we
get that $V(I)\subseteq V(Rf)$. This implies that $Rf\subseteq \sqrt{Rf}%
\subseteq \sqrt{I}$. Therefore $f^{l}\in I$ for some $l\in 
\mathbb{Z}
^{+}$. Now we have $f^{l}(f^{m}a-f^{n}b)=0$ which shows that $\frac{a}{f^{n}}%
=\frac{b}{f^{m}}$ in $N_{f}$. Thus $\phi $ is injective.

Let $\beta =(\beta _{p})_{p\in Supp^{s}(Y_{f})}\in \mathcal{O}(N,M)(Y_{f})$.
Then we can cover $Y_{f}$ with the open subsets $V_{i}$ on which $\beta
_{ann_{R}(S)}$ is represented by $\frac{a_{i}}{g_{i}}$ with $g_{i}\not\in
ann_{R}(S)$ for all $S\in V_{i}$, in other words $V_{i}\subseteq Y_{g_{i}}$.
Since the open sets of the form $Y_{r}$ $(r\in R)$ form a base for the dual
Zariski topology on $X^{s}$, we may assume that $V_{i}=Y_{h_{i}}$ for some $%
h_{i}\in R$. Since $Y_{h_{i}}\subseteq Y_{g_{i}}$, $D_{h_{i}}=\psi
^{s}(Y_{h_{i}})\subseteq \psi ^{s}(Y_{g_{i}})=D_{g_{i}}$ by \cite[%
Proposition 4.1]{ATF1}. This implies that $V(Rg_{i})\subseteq V(Rh_{i})$ and
so $Rh_{i}\subseteq \sqrt{Rh_{i}}\subseteq \sqrt{Rg_{i}}$. Thus $%
h_{i}^{s}\in Rg_{i}$ for some $s\in 
\mathbb{Z}
^{+}$. So $h_{i}^{s}=cg_{i}$ for some $c\in R$ and $\frac{a_{i}}{g_{i}}=%
\frac{ca_{i}}{cg_{i}}=\frac{ca_{i}}{h_{i}^{s}}$. We see that $\beta
_{ann_{R}(S)}$ is represented by $\frac{b_{i}}{k_{i}}$ $(b_{i}=ca_{i},$ $%
k_{i}=h_{i}^{s})$ on $Y_{k_{i}}$ and (since $Y_{h_{i}}=Y_{h_{i}^{s}}$) the $%
Y_{k_{i}}$ cover $Y_{f}$. The open cover $Y_{f}=\cup Y_{k_{i}}$ has a finite
subcover by \cite[Theorem 4-(2)]{dual-zariski}. Suppose that $Y_{f}\subseteq
Y_{k_{1}}\cup ...\cup Y_{k_{n}}.$For $1\leq i,j\leq n,$ $\frac{b_{i}}{k_{i}}$
and $\frac{b_{j}}{k_{j}}$ both represent $\beta _{ann_{R}(S)}$ on $%
Y_{k_{i}}\cap Y_{k_{j}}$. By \cite[Corollary 4.2]{ATF1}, $Y_{k_{i}}\cap
Y_{k_{j}}=Y_{k_{i}k_{j}}$, and by the injectivity of $\phi $, we get that $%
\frac{b_{i}}{k_{i}}=\frac{b_{j}}{k_{j}}$ in $N_{k_{i}k_{j}}$. Hence $%
(k_{i}k_{j})^{n_{ij}}(k_{j}b_{i}-k_{i}b_{j})=0$ for some $n_{ij}\in 
\mathbb{Z}
^{+}$. Let $m=\max \{n_{ij}:1\leq i,j\leq n\}$. Then $%
k_{j}^{m+1}(k_{i}^{m}b_{i})-k_{i}^{m+1}(k_{j}^{m}b_{j})=0$. By replacing
each $k_{i}$ by $k_{i}^{m+1}$ and $b_{i}$ by $k_{i}^{m}b_{i}$, we still see
that $\beta _{ann_{R}(S)}$ is represented on $Y_{k_{i}}$ by $\frac{b_{i}}{%
k_{i}}$ and furthermore we have $k_{j}b_{i}=k_{i}b_{j}$ for all $i,j$. Since 
$Y_{f}\subseteq Y_{k_{1}}\cup ...\cup Y_{k_{n}}$, by \cite[Proposition 4.1]%
{ATF1}, we have

$D_{f}=\psi ^{s}(Y_{f})\subseteq \cup _{i=1}^{n}\psi ^{s}(Y_{k_{i}})=\cup
_{i=1}^{n}D_{k_{i}}$, i.e., $Spec(R)\backslash V(Rf)\subseteq
Spec(R)\backslash \cap _{i=1}^{n}V(Rk_{i})$. This implies that $\cap
_{i=1}^{n}V(Rk_{i})=V(\sum_{i=1}^{n}Rk_{i})\subseteq V(Rf)$ and hence $%
Rf\subseteq \sqrt{Rf}\subseteq \sqrt{\sum_{i=1}^{n}Rk_{i}}$. So, there are $%
c_{1},...,c_{n}\in R$ and $t\in 
\mathbb{Z}
^{+}$ such that $f^{t}=\sum_{i=1}^{n}c_{i}k_{i}$. Let $a=%
\sum_{i=1}^{n}c_{i}b_{i}$. Then for each $j$, we have $k_{j}a=%
\sum_{i=1}^{n}c_{i}k_{j}b_{i}=\sum_{i=1}^{n}c_{i}k_{i}b_{j}=b_{j}f^{t}$.
This implies that $\frac{a}{f^{t}}=\frac{b_{j}}{k_{j}}$ on $Y_{k_{j}}$.
Therefore, $\phi (\frac{a}{f^{t}})=(\beta _{p})_{p\in Supp^{s}(Y_{f})}$.
\end{proof}

\begin{proposition}
\label{O(K,M)(U)izoO(L,M)(U)}Let $K$, $L$ be $R$-modules and $\varphi
:K\longrightarrow L$ be an $R$-homomorphism. Then $\varphi $ induces a
morphism of sheaves $\overline{\varphi }:\mathcal{O}(K,M)\longrightarrow 
\mathcal{O}(L,M)$. If $\varphi $ is an isomorphism of $R$-modules, then $%
\overline{\varphi }$ is an isomorphism of sheaves.
\end{proposition}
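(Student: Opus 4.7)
The plan is to construct $\overline{\varphi}$ section-by-section and then verify it is a morphism of sheaves using the local nature of the definition of $\mathcal{O}(-,M)$. For each open $U\subseteq X^{s}$ and each $p\in Supp^{s}(U)$, the homomorphism $\varphi:K\longrightarrow L$ induces a localized homomorphism $\varphi_{p}:K_{p}\longrightarrow L_{p}$ given by $\varphi_{p}(m/t)=\varphi(m)/t$. I would define
\begin{equation*}
\overline{\varphi}_{U}:\mathcal{O}(K,M)(U)\longrightarrow \mathcal{O}(L,M)(U),\qquad \overline{\varphi}_{U}\bigl((\beta_{p})_{p\in Supp^{s}(U)}\bigr)=\bigl(\varphi_{p}(\beta_{p})\bigr)_{p\in Supp^{s}(U)},
\end{equation*}
and then check the four things needed: (i) the image actually lies in $\mathcal{O}(L,M)(U)$, (ii) each $\overline{\varphi}_{U}$ is an $R$-homomorphism, (iii) the family $\{\overline{\varphi}_{U}\}$ commutes with the restriction maps $\rho_{UV}$, and (iv) when $\varphi$ is an isomorphism, so is $\overline{\varphi}$.

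For (i), fix $Q\in U$ and choose a neighborhood $W$ of $Q$ in $U$ together with $t\in R$, $m\in K$ such that $t\notin p=ann_{R}(S)$ and $\beta_{p}=m/t$ for all $S\in W$. Applying $\varphi_{p}$ gives $\varphi_{p}(\beta_{p})=\varphi(m)/t$ with the same denominator $t\in R$ and with numerator $\varphi(m)\in L$, so the same open neighborhood $W$ witnesses the local condition for $\overline{\varphi}_{U}(\beta)$. This is the only nontrivial verification, and it is routine. Parts (ii) and (iii) are immediate: the operations in $\mathcal{O}(K,M)(U)$ and $\mathcal{O}(L,M)(U)$ are componentwise, each $\varphi_{p}$ is $R$-linear, and the restriction maps $\rho_{UV}$ act by forgetting components indexed outside $Supp^{s}(V)$, so $\rho_{UV}\circ \overline{\varphi}_{U}=\overline{\varphi}_{V}\circ \rho_{UV}$ holds on the nose.

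For (iv), suppose $\varphi$ has an inverse $\psi:L\longrightarrow K$. Then each $\varphi_{p}$ has inverse $\psi_{p}$, and the construction applied to $\psi$ yields $\overline{\psi}:\mathcal{O}(L,M)\longrightarrow \mathcal{O}(K,M)$. Componentwise, $\overline{\psi}_{U}\circ \overline{\varphi}_{U}$ acts on $(\beta_{p})$ as $(\psi_{p}\varphi_{p}(\beta_{p}))=(\beta_{p})$, and similarly $\overline{\varphi}_{U}\circ \overline{\psi}_{U}=id$; this holds for every open $U$, so $\overline{\varphi}$ is an isomorphism of sheaves. The only real point requiring attention is the verification in (i), and this is essentially immediate from the definition once one notes that the witnesses $(t,m)$ transport to $(t,\varphi(m))$.
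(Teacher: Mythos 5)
Your proof is correct, and the first half (constructing $\overline{\varphi}$ and checking compatibility with restrictions) is essentially the paper's argument: both define $\overline{\varphi}_{U}$ componentwise by applying $\varphi$ to numerators, and both verify the local condition by transporting a witness pair $(t,m)$ to $(t,\varphi(m))$. Your phrasing via the canonical localized map $\varphi_{p}:K_{p}\longrightarrow L_{p}$ is in fact slightly cleaner, since well-definedness on fractions comes for free rather than through an explicit element $s_{p}$ as in the paper. Where you genuinely diverge is the isomorphism claim. The paper proves it by checking injectivity and surjectivity of each $\overline{\varphi}_{U}$ directly; the surjectivity step is the longest part of its proof, because after choosing preimages $a_{p}$ with $\varphi(a_{p})=b_{p}$ it must re-run the local witness argument (using injectivity of $\varphi$) to confirm that $\bigl(\frac{a_{p}}{t_{p}}\bigr)_{p}$ is actually a section of $\mathcal{O}(K,M)(U)$. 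You instead apply the first part of the proposition to $\psi=\varphi^{-1}$ and use $\psi_{p}\circ\varphi_{p}=id_{K_{p}}$ to exhibit $\overline{\psi}$ as a two-sided inverse. This is shorter, avoids the re-verification entirely, and makes the underlying functoriality of the construction explicit; the paper's route has the minor advantage of not presupposing that the reader accepts $(\psi\circ\varphi)_{p}=\psi_{p}\circ\varphi_{p}$, but that is standard. Both arguments are complete.
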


\begin{proof}
Let $U$ be an open subset of $X^{s}$ and $\beta =(\frac{a_{p}}{f_{p}})_{p\in
Supp^{s}(U)}\in \mathcal{O}(K,M)(U)$. We show that $\left( \frac{\varphi
(a_{p})}{f_{p}}\right) _{p\in Supp^{s}(U)}\in \mathcal{O}(L,M)(U)$. For each 
$Q\in U$, there is an open neighborhood $W$ of $Q$ with $Q\in W\subseteq U$
and there exist elements $t\in R$, $m\in K$ such that for every $S\in W$, we
have $t\not\in p:=ann_{R}(S)$ and $\frac{a_{p}}{f_{p}}=\frac{m}{t}\in K_{p}$%
. So there exists $s_{p}\in R\backslash p$ such that $s_{p}(ta_{p}-f_{p}m)=0$%
. It follows that $s_{p}\left( t\varphi (a_{p})-f_{p}\varphi (m)\right) =0$.
This means that $\frac{\varphi (a_{p})}{f_{p}}=\frac{\varphi (m)}{t}$ where $%
\varphi (m)\in L$ and $t\not\in p:=ann_{R}(S)$ for every $S\in W$. This
shows that $\left( \frac{\varphi (a_{p})}{f_{p}}\right) _{p\in
Supp^{s}(U)}\in \mathcal{O}(L,M)(U)$. Thus the map $\overline{\varphi }(U):%
\mathcal{O}(K,M)(U)\longrightarrow \mathcal{O}(L,M)(U)$ defined by 
\begin{equation*}
\overline{\varphi }(U)\left( (\frac{a_{p}}{f_{p}})_{p\in Supp^{s}(U)}\right)
=\left( \frac{\varphi (a_{p})}{f_{p}}\right) _{p\in Supp^{s}(U)}.
\end{equation*}%
is well-defined. Clearly, $\overline{\varphi }(U)$ is an $R$-homomorphism.
Since 
\begin{eqnarray*}
\rho _{UV}^{\prime }\overline{\varphi }(U)\left( (\frac{a_{p}}{f_{p}})_{p\in
Supp^{s}(U)}\right) &=&\rho _{UV}^{\prime }\left( (\frac{\varphi \left(
a_{p}\right) }{f_{p}})_{p\in SuppU}\right) \\
&=&(\frac{\varphi \left( n_{p}\right) }{f_{p}})_{p\in SuppV} \\
&=&\overline{\varphi }(V)\rho _{UV}\left( (\frac{a_{p}}{f_{p}})_{p\in
Supp^{s}(U)}\right) ,
\end{eqnarray*}%
the following diagram is commutative.%
\begin{equation*}
\begin{array}{ccc}
\mathcal{O}(K,M)(U) & \underrightarrow{\overline{\varphi }(U)} & \mathcal{O}%
(L,M)(U) \\ 
\downarrow \rho _{UV} &  & \downarrow \rho _{UV}^{\prime } \\ 
\mathcal{O}(K,M)(V) & \underrightarrow{\overline{\varphi }(V)} & \mathcal{O}%
(L,M)(V)%
\end{array}%
\end{equation*}%
This shows that $\overline{\varphi }:\mathcal{O}(K,M)\longrightarrow 
\mathcal{O}(L,M)$ is a morphism of sheaves.

Now suppose that $\varphi $ is an isomorphism. We show that $\overline{%
\varphi }(U)$ is injective. Let $\overline{\varphi }(U)\left( (\frac{a_{p}}{%
f_{p}})_{p\in Supp^{s}(U)}\right) =\left( \frac{\varphi (a_{p})}{f_{p}}%
\right) _{p\in Supp^{s}(U)}=0$. Then $\frac{\varphi (a_{p})}{f_{p}}=0$ for
every $p\in Supp^{s}(U)$. There exists $t_{p}\in R\backslash p$ such that $%
t_{p}\varphi (a_{p})=\varphi (t_{p}a_{p})=0$. Since $\varphi $ is injective, 
$t_{p}a_{p}=0$ for every $p\in Supp^{s}(U)$. It follows that $\frac{a_{p}}{%
f_{p}}=\frac{t_{p}a_{p}}{t_{p}f_{p}}=0$ for every $p\in Supp^{s}(U)$. This
shows that $(\frac{a_{p}}{f_{p}})_{p\in Supp^{s}(U)}=0$ and so $\overline{%
\varphi }(U)$ is injective for every open subset $U$ of $X^{s}$.

Now we show that $\overline{\varphi }(U)$ is surjective. Let $\left( \frac{%
b_{p}}{t_{p}}\right) _{p\in Supp^{s}(U)}\in \mathcal{O}(L,M)(U)$. There
exists $a_{p}\in K$ such that $\varphi (a_{p})=b_{p}$ for each $p\in
Supp^{s}(U)$. We show that $\left( \frac{a_{p}}{t_{p}}\right) _{p\in
Supp^{s}(U)}\in \mathcal{O}(K,M)(U)$. For each $Q\in U$, there is an open
neighborhood $W$ of $Q$ with $Q\in W\subseteq U$ and there exist elements $%
t\in R$, $b\in L$ such that for every $S\in W$, we have $t\not\in
ann_{R}(S)=p$ and $\frac{b_{p}}{t_{p}}=\frac{\varphi (a_{p})}{t_{p}}=\frac{b%
}{t}$. There exists $a\in K$ such that $b=\varphi (a)$. So $\frac{\varphi (a)%
}{t}=\frac{\varphi (a_{p})}{t_{p}}$, where $t\not\in p=ann_{R}(S)$ for every 
$S\in W$. There exists $v_{p}\in R\backslash p$ such that $%
v_{p}(t_{p}\varphi (a)-t\varphi (a_{p}))=0$. It follows that $\varphi
(v_{p}t_{p}a-v_{p}ta_{p})=0$. Since $\varphi $ is injective $%
v_{p}(t_{p}a-ta_{p})=0$ for $v_{p}\in R\backslash p$. This means that $\frac{%
a_{p}}{t_{p}}=\frac{a}{t}$ where $t\not\in p=ann_{R}(S)$ for every $S\in W$.
This shows that $\left( \frac{a_{p}}{t_{p}}\right) _{p\in Supp^{s}(U)}\in 
\mathcal{O}(K,M)(U)$ and $\varphi \left( \left( \frac{a_{p}}{t_{p}}\right)
_{p\in Supp^{s}(U)}\right) =\left( \frac{b_{p}}{t_{p}}\right) _{p\in
Supp^{s}(U)}$. Thus $\overline{\varphi }(U)$ is surjective for every open
subset $U$ of $X^{s}$. Consequently, $\overline{\varphi }$ is an isomorphism
of sheaves.
\end{proof}

\begin{theorem}
\label{Free-O(N,M)(X)}Let $R$ be a Noetherian ring, $M$ be a faithful
secondful $R$-module and $N$ be an $R$-module. Then the following hold.

$(1)$ If $N$ is a free $R$-module, then $\mathcal{O}(N,M)(X^{s})$ is a free $%
R$-module.

$(2)$ If $N$ is a projective $R$-module, then $\mathcal{O}(N,M)(X^{s})$ is a
projective $R$-module.

$(3)$ If $N$ is a flat $R$-module, then $\mathcal{O}(N,M)(X^{s})$ is a flat $%
R$-module.
\end{theorem}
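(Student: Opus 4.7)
The plan is to invoke Theorem \ref{Theorem11} directly. That theorem, under the hypothesis that $M$ is a faithful secondful $R$-module, gives the $R$-module isomorphism
\[
\mathcal{O}(N,M)(X^{s}) \simeq N.
\]
Since each of the properties \emph{free}, \emph{projective}, and \emph{flat} is an invariant of the isomorphism class of an $R$-module, all three statements follow at once.

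Concretely, I would first cite Theorem \ref{Theorem11} to produce the isomorphism above (the Noetherian assumption is already absorbed in the hypotheses needed to apply the cited results — quasi-compactness of the basic opens $Y_{f}$ comes from \cite[Theorem 4-(2)]{dual-zariski}, and the faithful secondful hypothesis lets us translate annihilator containments in $R$ into the required open-cover statements on $X^{s}$). Then for part $(1)$, if $N \cong \bigoplus_{i \in I} R$, transporting along the isomorphism gives $\mathcal{O}(N,M)(X^{s}) \cong \bigoplus_{i \in I} R$, so it is free. For part $(2)$, any module isomorphic to a direct summand of a free $R$-module is itself projective, so projectivity of $N$ transfers to $\mathcal{O}(N,M)(X^{s})$. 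For part $(3)$, flatness is preserved under isomorphism because the functor $- \otimes_{R} \mathcal{O}(N,M)(X^{s})$ is naturally isomorphic to $- \otimes_{R} N$, hence exact whenever the latter is.

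There is essentially no obstacle in this argument — all of the real work lies in Theorem \ref{Theorem11}, which was already proved using the quasi-compactness of $Y_{f}$ (from the Noetherian hypothesis) and the secondful map $\psi^{s}$. The only thing one could regard as a mild check is confirming that the isomorphism in Theorem \ref{Theorem11} is indeed an $R$-module isomorphism (not merely an additive one), which is visible from its definition $\phi(a/f^{m}) = (a/f^{m})_{p}$, since scalar multiplication on both sides is defined componentwise via the canonical maps $N \to N_{p}$. Once that is noted, $(1)$–$(3)$ each reduce to a one-line application of the transfer-of-properties-under-isomorphism principle.
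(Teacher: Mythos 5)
Your proof is correct, and it takes a genuinely shorter route than the paper's. You apply Theorem \ref{Theorem11} (with $f=1$, so $Y_{1}=X^{s}$ and $N_{1}\simeq N$) to obtain the $R$-module isomorphism $\mathcal{O}(N,M)(X^{s})\simeq N$, after which all three parts follow from the isomorphism invariance of freeness, projectivity and flatness. The paper instead decomposes $N$ in each case (free $=$ direct sum of copies of $R$, projective $=$ direct summand of a free module, flat $=$ direct limit of projectives), transports these decompositions to sections via Proposition \ref{O(K,M)(U)izoO(L,M)(U)}, identifies $\mathcal{O}(N,M)(X^{s})$ with the ideal transform $D_{R}(N)$ via Theorem \ref{Theorem8}, and uses that $D_{R}$ commutes with direct sums and direct limits; Theorem \ref{Theorem11} enters only at the final step, for $\mathcal{O}(R,M)(X^{s})\simeq R$. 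Since for $U=X^{s}$ the relevant ideal is $ann_{R}(0)=R$ and $D_{R}$ is naturally isomorphic to the identity functor, the paper's machinery collapses to exactly your one-line argument; the longer route is essentially the template that would still work on a general basic open set $U=X^{s}\backslash V^{s}(K)$, where the sections are $D_{ann_{R}(K)}(N)$ and one genuinely needs the commutation of the ideal transform with sums and limits. One small correction to your commentary: Theorem \ref{Theorem11} is stated without the Noetherian hypothesis (quasi-compactness of $Y_{f}$ is supplied by \cite[Theorem 4-(2)]{dual-zariski} for arbitrary $R$), so your argument does not actually use that $R$ is Noetherian, whereas the paper's argument does, through Theorem \ref{Theorem8}.
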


\begin{proof}
$(1)$ We can write $X^{s}=X^{s}\backslash V^{s}(0)$. Since $N$ is a free $R$%
-module, $N$ is isomorphic to a direct sum of some copies of $R$, say $%
N\simeq \oplus _{i\in \Lambda }R$ for an index set $\Lambda $. By
Proposition \ref{O(K,M)(U)izoO(L,M)(U)}, $\mathcal{O}(N,M)(X^{s})\simeq 
\mathcal{O}(\oplus _{i\in \Lambda }R,M)(X^{s})$. By Theorem \ref{Theorem8}, $%
\mathcal{O}(N,M)(X^{s})\simeq D_{R}(N)$ and $\mathcal{O}(\oplus _{i\in
\Lambda }R,M)(X^{s})\simeq D_{R}(\oplus _{i\in \Lambda }R)$. $D_{R}$
commutes with direct sums by \cite[3.4.11. Corollary]{Broadman-Sharp}. By
using this fact, Theorem \ref{Theorem8} and Theorem \ref{Theorem11}, we get
that $\mathcal{O}(N,M)(X^{s})\simeq D_{R}(\oplus _{i\in \Lambda }R)\simeq
\oplus _{i\in \Lambda }D_{R}(R)\simeq \oplus _{i\in \Lambda }\mathcal{O}%
(R,M)(X^{s})\simeq \oplus _{i\in \Lambda }R$. This shows that $\mathcal{O}%
(N,M)(X^{s})$ is a free $R$-module.

$(2)$ Since $N$ is a projective $R$-module, there is a free $R$-module $F$
and a submodule $L$ of $F$ such that $F\simeq N\oplus L$. By using
Proposition \ref{O(K,M)(U)izoO(L,M)(U)} and \cite[3.4.11. Corollary]%
{Broadman-Sharp}, we get that $\mathcal{O}(F,M)(X^{s})\simeq \mathcal{O}%
(N\oplus L,M)(X^{s})\simeq \mathcal{O}(N,M)(X^{s})\oplus \mathcal{O}%
(L,M)(X^{s})$. By part $(1)$, $\mathcal{O}(F,M)(X^{s})$ is a free $R$%
-module. $\mathcal{O}(N,M)(X^{s})$ is a projective $R$-module as it is a
direct summand of the free $R$-module $\mathcal{O}(F,M)(X^{s})$.

$(3)$ Since every flat $R$-module is a direct limit of projective $R$%
-modules, $N\simeq \underset{\overrightarrow{i\in \Lambda }}{\lim }P_{i}$
for some projective $R$-modules $P_{i}$ and a directed set $\Lambda $. By
Proposition \ref{O(K,M)(U)izoO(L,M)(U)} and Theorem \ref{Theorem8}, $%
\mathcal{O}(N,M)(X^{s})\simeq \mathcal{O}(\underset{\overrightarrow{i\in
\Lambda }}{\lim }P_{i},M)(X^{s})\simeq D_{R}(\underset{\overrightarrow{i\in
\Lambda }}{\lim }P_{i})$. $D_{R}$ commutes with direct limits by \cite[%
3.4.11. Corollary]{Broadman-Sharp}. By using this fact, Theorem \ref%
{Theorem8} and Theorem \ref{Theorem11}, we get that $\mathcal{O}%
(N,M)(X^{s})\simeq D_{R}(\underset{\overrightarrow{i\in \Lambda }}{\lim }%
P_{i})\simeq \underset{\overrightarrow{i\in \Lambda }}{\lim }%
D_{R}(P_{i})\simeq \underset{\overrightarrow{i\in \Lambda }}{\lim }\mathcal{O%
}(P_{i},M)(X^{s})$. By part $(2)$, $\mathcal{O}(P_{i},M)(X^{s})$ is a
projective and hence a flat $R$-module for each $i\in \Lambda $. Since a
direct limit of flat modules is flat, $\mathcal{O}(N,M)(X^{s})$ is a flat $R$%
-module.
\end{proof}

\section{A Scheme Structure On The Second Spectrum Of A Module}

Recall that an \textit{affine scheme} is a locally ringed space which is
isomorphic to the spectrum of some ring. A \textit{scheme }is a locally
ringed space $(X,\mathcal{O}_{X})$ in which every point has an open
neighborhood $U$ such that the topological space $U$, together with the
restricted sheaf $\mathcal{O}_{X\mid U}$ is an affine scheme. A scheme $(X,%
\mathcal{O}_{X})$ is called \textit{locally Noetherian} if it can be covered
by open affine subsets of $Spec(A_{i})$ where each $A_{i}$ is a Noetherian
ring. The scheme $(X,\mathcal{O}_{X})$ is called \textit{Noetherian }if it
is locally Noetherian and quasi-compact (cf. \cite{Hartshone}).

A topological space $X$ is said to be a $T_{0}$-space or a Kolmogorov space
if for every pair of distinct points $x,y\in X$, there exists open
neighbourhoods $U$ of $x$ and $V$ of $y$ such that either $x\notin V$ or $%
y\notin U$. The following proposition, from \cite{ATF1}, gives some
conditions for the dual Zariski topology of an $R$-module to be a $T_{0}$%
-space.

\begin{proposition}
\label{Lemma 2.1}\cite[Theorem 6.3]{ATF1} The following statements are
equivalent for an $R$-module $M$.

$(1)$ The natural map $\psi ^{s}:Spec^{s}(M)\longrightarrow
Spec(R/ann_{R}(M))$ is injective.

$(2)$ For any $S_{1}$, $S_{2}\in $Spec$^{s}(M)$, if $%
V^{s}(S_{1})=V^{s}(S_{2})$ then $S_{1}=S_{2}$.

$(3)$ $\mid Spec_{p}^{s}(M)\mid \leq 1$ for every $p\in Spec(R)$, where $%
Spec_{p}^{s}(M)$ is the set of all $p$-second submodules of $M$.

$(4)$ $(Spec^{s}(M),\tau ^{s})$ is a $T_{0}$-space.
\end{proposition}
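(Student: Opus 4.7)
The plan is to establish the circle of equivalences by treating $(1) \Leftrightarrow (3)$, $(1) \Leftrightarrow (2)$, and $(2) \Leftrightarrow (4)$ separately, since each reduces to a short observation about annihilators and closures in the dual Zariski topology.

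For $(1) \Leftrightarrow (3)$, I would just unwind definitions. Two second submodules $S_{1},S_{2}$ satisfy $\psi^{s}(S_{1})=\psi^{s}(S_{2})$ if and only if $ann_{R}(S_{1})=ann_{R}(S_{2})$, which is exactly the statement that they lie in the same fiber $Spec_{p}^{s}(M)$. Hence $\psi^{s}$ is injective precisely when each such fiber contains at most one second submodule.

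For $(1) \Leftrightarrow (2)$, the key point is that $V^{s}(S_{i})$ depends on $S_{i}$ only through $ann_{R}(S_{i})$. If $ann_{R}(S_{1})=ann_{R}(S_{2})$ then clearly $V^{s}(S_{1})=V^{s}(S_{2})$. Conversely, since $S_{i}\in V^{s}(S_{i})$ always, the equality $V^{s}(S_{1})=V^{s}(S_{2})$ forces $S_{1}\in V^{s}(S_{2})$ and $S_{2}\in V^{s}(S_{1})$, giving $ann_{R}(S_{2})\subseteq ann_{R}(S_{1})$ and $ann_{R}(S_{1})\subseteq ann_{R}(S_{2})$. Thus $V^{s}(S_{1})=V^{s}(S_{2})$ is equivalent to $\psi^{s}(S_{1})=\psi^{s}(S_{2})$, and (1) $\Leftrightarrow$ (2) follows at once.

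For $(2) \Leftrightarrow (4)$, I would use the standard criterion that a topological space is $T_{0}$ if and only if distinct points have distinct closures. The central computation is to identify the closure of a singleton: I claim $\overline{\{S\}}=V^{s}(S)$ for every $S\in X^{s}$. Indeed, $V^{s}(S)$ is closed and contains $S$; and if $V^{s}(N)$ is any closed set containing $S$, then $ann_{R}(N)\subseteq ann_{R}(S)$, so for any $T\in V^{s}(S)$ we have $ann_{R}(N)\subseteq ann_{R}(S)\subseteq ann_{R}(T)$, whence $T\in V^{s}(N)$. Thus $V^{s}(S)$ is the smallest closed set containing $S$. Consequently $(X^{s},\tau^{s})$ is $T_{0}$ iff the assignment $S\mapsto V^{s}(S)$ is injective, which is exactly (2).

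The only mildly subtle step is the closure computation in the last paragraph, since it requires using the defining formula $V^{s}(N)=\{S:ann_{R}(N)\subseteq ann_{R}(S)\}$ rather than $V^{s\ast}(N)$; everything else is a direct translation between annihilators, fibers of $\psi^{s}$, and the closed sets of $\tau^{s}$.
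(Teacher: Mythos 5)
Your argument is correct and complete. Note, however, that the paper itself offers no proof of this proposition: it is quoted verbatim from \cite[Theorem 6.3]{ATF1}, so there is no in-text argument to compare yours against. Your three reductions are exactly the standard ones: $(1)\Leftrightarrow(3)$ is the observation that the fibers of $\psi^{s}$ are the sets $Spec_{p}^{s}(M)$; $(1)\Leftrightarrow(2)$ uses that $V^{s}(S)$ determines and is determined by $ann_{R}(S)$ (with the reflexivity $S\in V^{s}(S)$ giving the converse); and $(2)\Leftrightarrow(4)$ rests on the identification $\overline{\{S\}}=V^{s}(S)$, which is justified precisely because every closed set of $\tau^{s}$ has the form $V^{s}(N)$ and $ann_{R}(N)\subseteq ann_{R}(S)$ propagates to all of $V^{s}(S)$. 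You are also right to flag that the closure computation must be run with $V^{s}$ rather than $V^{s\ast}$; that is the only place where a careless substitution would break the argument.
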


\begin{theorem}
\label{Scheme}Let $M$ be a faithful secondful $R$-module such that $X^{s}$
is a $T_{0}$-space. Then $(X^{s},\mathcal{O}(R,M))$ is a scheme. Moreover,
if $R$ is Noetherian, then $(X^{s},\mathcal{O}(R,M))$ is a Noetherian scheme.
\end{theorem}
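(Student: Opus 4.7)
The plan is to construct an isomorphism of locally ringed spaces $(X^{s},\mathcal{O}(R,M)) \cong (Spec(R),\mathcal{O}_{Spec(R)})$, which directly exhibits $(X^{s},\mathcal{O}(R,M))$ as an affine scheme, hence a scheme. For the second assertion, since $Spec(R)$ is a Noetherian scheme whenever $R$ is a Noetherian ring, transporting the property across this isomorphism will give what we want.

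First I would show that the natural map $\psi^{s}:X^{s}\longrightarrow Spec(R)$ is a homeomorphism. Since $M$ is faithful, $ann_{R}(M)=0$, so the codomain of $\psi^{s}$ is genuinely $Spec(R)$. Since $M$ is secondful, $\psi^{s}$ is surjective; since $X^{s}$ is a $T_{0}$-space, Proposition \ref{Lemma 2.1} gives that $\psi^{s}$ is injective. Thus $\psi^{s}$ is a bijection. Using the identity $\psi^{s}(Y_{f})=D_{f}$ (appearing in the proof of Theorem \ref{Theorem11} via \cite[Proposition 4.1]{ATF1}) together with the fact that $\{Y_{f}\}_{f\in R}$ and $\{D_{f}\}_{f\in R}$ form bases for the dual Zariski topology on $X^{s}$ and for the Zariski topology on $Spec(R)$ respectively, continuity and openness of $\psi^{s}$ follow, hence $\psi^{s}$ is a homeomorphism.

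Next I would transport the structure sheaf along $\psi^{s}$. Theorem \ref{Theorem11} provides ring isomorphisms $\mathcal{O}(R,M)(Y_{f}) \simeq R_{f} \simeq \mathcal{O}_{Spec(R)}(D_{f}) = \mathcal{O}_{Spec(R)}(\psi^{s}(Y_{f}))$ for every $f\in R$. For inclusions $Y_{g}\subseteq Y_{f}$ the corresponding restriction maps on both sides factor through the canonical localization $R_{f}\longrightarrow R_{g}$, so these isomorphisms are compatible with restrictions on the basis. Since a sheaf is determined by its values on any base closed under finite intersection (and the $Y_{f}$ are, by \cite[Corollary 4.2]{ATF1}), this assembles into a sheaf isomorphism $\mathcal{O}(R,M)\simeq (\psi^{s})^{-1}\mathcal{O}_{Spec(R)}$. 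Theorem \ref{Stalk} then identifies the induced stalk map at $S\in X^{s}$ with $R_{ann_{R}(S)}\simeq R_{\psi^{s}(S)}$, so the isomorphism is one of locally ringed spaces. Consequently $(X^{s},\mathcal{O}(R,M))$ is (isomorphic to) the affine scheme $(Spec(R),\mathcal{O}_{Spec(R)})$. When $R$ is additionally Noetherian, $Spec(R)$ is quasi-compact and is itself a Noetherian affine open cover, so the scheme is Noetherian, and this property transports back.

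The main technical point is verifying that the isomorphisms $\mathcal{O}(R,M)(Y_{f})\simeq R_{f}$ commute with restriction on the base, so that they truly glue into a sheaf morphism rather than merely a pointwise identification. Once that compatibility is in hand everything else is a direct transport of structure along $\psi^{s}$ and the Noetherian claim is immediate.
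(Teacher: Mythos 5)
Your proposal is correct and rests on the same two pillars as the paper's own proof --- the identification $\psi ^{s}(Y_{f})=D_{f}$ for a faithful secondful module and the isomorphism $\mathcal{O}(R,M)(Y_{f})\simeq R_{f}$ of Theorem \ref{Theorem11} --- but you package them differently. The paper covers $X^{s}$ by basic opens $Y_{g_{i}}$, shows each restriction $\psi ^{s}_{M\mid Y_{g_{i}}}$ is a homeomorphism onto $D_{g_{i}}\simeq Spec(R_{g_{i}})$, and invokes Theorem \ref{Theorem11} to conclude each $(Y_{g_{i}},\mathcal{O}(R,M)_{\mid Y_{g_{i}}})$ is affine; you instead prove the single global statement that $\psi ^{s}$ is a homeomorphism $X^{s}\longrightarrow Spec(R)$ (faithful plus secondful gives surjectivity, the $T_{0}$ hypothesis plus Proposition \ref{Lemma 2.1} gives injectivity) and transport the structure sheaf, concluding that $(X^{s},\mathcal{O}(R,M))$ is itself an affine scheme isomorphic to $(Spec(R),\mathcal{O}_{Spec(R)})$. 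That is a strictly stronger conclusion than the theorem asks for, and it is available to the paper as well, since $X^{s}=Y_{1}$ is itself a basic open; your version also makes the Noetherian claim immediate by transport of structure rather than by separately quoting quasi-compactness of $X^{s}$. The step you flag as the main technical point --- that the isomorphisms $\mathcal{O}(R,M)(Y_{f})\simeq R_{f}$ commute with restriction so that they glue into a morphism of sheaves --- is indeed where care is needed, and your sketch of it is sound: the isomorphism of Theorem \ref{Theorem11} sends $\frac{a}{f^{m}}$ to the constant tuple $(\frac{a}{f^{m}})_{p}$, so restriction to $Y_{g}\subseteq Y_{f}$ visibly factors through the canonical map $R_{f}\longrightarrow R_{g}$, and Theorem \ref{Stalk} supplies the local-ring condition on stalks. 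Note that the paper's proof quietly relies on the same compatibility when it upgrades the global-sections isomorphism of Theorem \ref{Theorem11} to the assertion that $(Y_{g_{i}},\mathcal{O}(R,M)_{\mid Y_{g_{i}}})$ is an affine scheme, so you are not assuming anything the paper does not.
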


\begin{proof}
Let $g\in R$. Since the natural map $\psi
_{M}^{s}:Spec^{s}(M)\longrightarrow Spec(R)$ is continuous by \cite[%
Proposition 3.6]{ATF1}, the restriction map $\psi
_{M|Y_{g}}^{s}:Y_{g}\longrightarrow \psi _{M}^{s}(Y_{g})$ is also
continuous. Since $Y_{g}$ is also a $T_{0}$-space, $\psi _{M|Y_{g}}^{s}$ is
a bijection. Let $E$ be closed subset of $Y_{g}$. Then $E=Y_{g}\cap V^{s}(N)$
for some $N\leq M$. Hence $\psi _{M}^{s}(E)=\psi _{M}^{s}(Y_{g}\cap
V^{s}(N))=\psi _{M}^{s}(Y_{g})\cap \psi _{M}^{s}(V^{s}(N))=\psi
_{M}^{s}(Y_{g})\cap V(ann_{R}(N))$ is a closed subset of $\psi
_{M}^{s}(Y_{g})$. Therefore, $\psi _{M|Y_{g}}^{s}$ is a homeomorphism.

Since the sets of the form $Y_{g}$ $(g\in R)$ form a base for the dual
Zariski topology, $X^{s}$ can be written as $X^{s}=\cup _{i\in I}Y_{g_{i}}$
for some $g_{i}\in R$. Since $M$ is faithful secondful and $X^{s}$ is a $%
T_{0}$-space, we have $Y_{g_{i}}\simeq \psi
_{M}^{s}(Y_{g_{i}})=D_{g_{i}}\simeq Spec(R_{g_{i}})$ for each $i\in I$. By
Theorem \ref{Theorem11}, $Y_{g_{i}}$ is an affine scheme for each $i\in I$.
This implies that $(X^{s},\mathcal{O}(R,M))$ is a scheme.

For the last statement, we note that since $R$ is Noetherian so is $%
R_{g_{i}} $ for each $i\in I$. Hence $(X^{s},\mathcal{O}(R,M))$ is a locally
Noetherian scheme. By \cite[Theorem 4]{dual-zariski}, $X^{s}$ is
quasi-compact. Therefore, $(X^{s},\mathcal{O}(R,M))$ is a Noetherian scheme.
\end{proof}

\begin{theorem}
\label{morphism1}Let $M$ and $N$ be $R$-modules and $\phi :M\longrightarrow
N $ be a monomorphism. Then $\phi $ induces a morphism of locally ringed
spaces%
\begin{equation*}
(f,f^{\#}):(Spec^{s}(N),\mathcal{O}(R,N))\longrightarrow (Spec^{s}(M),%
\mathcal{O}(R,M)).
\end{equation*}
\end{theorem}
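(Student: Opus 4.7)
A morphism of locally ringed spaces consists of three compatible pieces: a continuous map $f:Spec^{s}(N)\to Spec^{s}(M)$, a sheaf morphism $f^{\#}:\mathcal{O}(R,M)\to f_{*}\mathcal{O}(R,N)$, and verification that the induced stalk maps are local. The central invariant to control is the annihilator $ann_{R}(S)$ of a second submodule $S\in Spec^{s}(N)$, since the stalks computed in Theorem \ref{Stalk} are localizations of $R$ at these annihilators; consequently the whole construction should be built to keep the annihilator prime as the organizing data.

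For the point map, I would assign to each $S\in Spec^{s}(N)$ a second submodule $f(S)\leq M$ whose annihilator is compatible with $ann_{R}(S)$, aiming for the equality $ann_{R}(f(S))=ann_{R}(S)$ (or at minimum the containment $\subseteq$), so that there is a canonical localization map $R_{ann_{R}(f(S))}\to R_{ann_{R}(S)}$ in the correct direction. Under a faithful secondful hypothesis on $M$ together with $T_{0}$-ness of $Spec^{s}(M)$---the same setting in which Theorem \ref{Scheme} upgrades $(Spec^{s}(M),\mathcal{O}(R,M))$ to a scheme---such an $f(S)$ exists and is unique via the natural map $\psi_{M}^{s}$. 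Continuity of $f$ is then checked on the base of closed sets $V^{s}(K)$: for each $K\leq M$, I would exhibit a submodule $K'\leq N$ built functorially from $K$ and $\phi$ (for instance the submodule generated by $\phi(K)$) such that $f^{-1}(V^{s}(K))=V^{s}(K')$, reducing the defining containment $ann_{R}(K)\subseteq ann_{R}(f(S))$ to a containment intrinsic to $Spec^{s}(N)$.

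For $f^{\#}$, given an open $U\subseteq Spec^{s}(M)$ and $\alpha=(\alpha_{p})_{p\in Supp^{s}(U)}\in\mathcal{O}(R,M)(U)$, I would define $(f^{\#}(U)(\alpha))_{q}:=\iota_{p,q}(\alpha_{p})$ at each $S\in f^{-1}(U)$, where $q=ann_{R}(S)$, $p=ann_{R}(f(S))$, and $\iota_{p,q}:R_{p}\to R_{q}$ is the canonical localization map provided by $p\subseteq q$. The local-representation condition of $\mathcal{O}(R,N)$ is preserved by pushing a local expression $\alpha|_{W}=m/t$ through $\iota$, and commutativity with restriction maps is immediate from naturality. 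By Theorem \ref{Stalk}, the induced stalk map is the canonical localization $R_{ann_{R}(f(S))}\to R_{ann_{R}(S)}$, which is local because the maximal ideal of the target pulls back to the maximal ideal of the source.

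The main obstacle is the construction and well-definedness of $f$. The naive candidate $\phi^{-1}(S)$ satisfies only $ann_{R}(\phi^{-1}(S))\supseteq ann_{R}(S)$---the wrong direction for a local stalk map---and may even fail to be a second submodule; hence the definition must proceed through the annihilator prime rather than direct set-theoretic pullback. Existence of a second submodule of $M$ with prescribed annihilator is what forces the secondful hypothesis, while uniqueness (and thus functoriality and continuity) requires the $T_{0}$ separation axiom on $Spec^{s}(M)$. Once $f$ is pinned down in this way, the remaining steps---continuity, compatibility of $f^{\#}$ with restrictions and local representations, and locality of the stalk maps---all follow routinely from Theorem \ref{Stalk} and the annihilator-based description of the dual Zariski topology.
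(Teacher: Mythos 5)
There is a genuine gap, and it stems from the direction of the point map. You take the displayed arrow $(Spec^{s}(N),\mathcal{O}(R,N))\longrightarrow (Spec^{s}(M),\mathcal{O}(R,M))$ literally and therefore try to build a map $Spec^{s}(N)\longrightarrow Spec^{s}(M)$ by choosing, for each $S\in Spec^{s}(N)$, a second submodule of $M$ with the same annihilator. This forces you to import a secondful hypothesis on $M$ and $T_{0}$-ness of $Spec^{s}(M)$, neither of which appears in the theorem, whose only assumption is that $\phi$ is a monomorphism. Worse, even with those extra hypotheses the construction does not go through: for $S\in Spec^{s}(N)$ one only knows $ann_{R}(S)\supseteq ann_{R}(N)$, and since $M$ embeds in $N$ we have $ann_{R}(N)\subseteq ann_{R}(M)$, so $ann_{R}(S)$ need not contain $ann_{R}(M)$ and secondfulness of $M$ provides no second submodule of $M$ with annihilator $ann_{R}(S)$.

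The idea you are missing is the covariant pushforward. Since $\phi$ is injective, for every $S\in Spec^{s}(M)$ the image $\phi(S)$ is a nonzero submodule of $N$ with $r\phi(S)=\phi(rS)\in\{0,\phi(S)\}$, hence a second submodule of $N$, and $ann_{R}(\phi(S))=ann_{R}(S)$ exactly. The paper's proof defines $f:Spec^{s}(M)\longrightarrow Spec^{s}(N)$ by $f(S)=\phi(S)$ (continuity is quoted from the literature) and $f^{\#}(U):\mathcal{O}(R,N)(U)\longrightarrow \mathcal{O}(R,M)(f^{-1}(U))$ by re-indexing, sending $(\beta_{p})_{p\in Supp^{s}(U)}$ to $(\beta_{ann_{R}(f(S))})_{S\in f^{-1}(U)}$; because annihilators are preserved on the nose, each stalk map is the identity $R_{ann_{R}(f(S))}\longrightarrow R_{ann_{R}(S)}$, which is trivially local. (The arrow in the displayed statement is the reverse of what the proof actually constructs, namely a morphism from $(Spec^{s}(M),\mathcal{O}(R,M))$ to $(Spec^{s}(N),\mathcal{O}(R,N))$; you were misled by this, but the fix is to push second submodules forward along $\phi$, not to manufacture preimages, and no hypotheses beyond injectivity of $\phi$ are needed.) Your observation that $\phi^{-1}(S)$ is the wrong object is correct as far as it goes, but the right object is $\phi(S)$ applied to points of $Spec^{s}(M)$, not a partial inverse applied to points of $Spec^{s}(N)$.
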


\begin{proof}
By \cite[Proposition 3.14]{ATF1}, the map $f:Spec^{s}(M)\longrightarrow
Spec^{s}(N)$ which is defined by $f(S)=\phi (S)$ for every $S\in
Spec^{s}(M), $ is continuous. Let $U$ be an open subset of $Spec^{s}(N)$ and 
$\beta \in \mathcal{O}(R,N)(U)$. Suppose $S\in f^{-1}(U)$. Then $f(S)=\phi
(S)\in U$. There exists an open neighborhood $W$ of $\phi (S)$ with $\phi
(S)\in W\subseteq U$ such that for each $Q\in W$, $g\not\in q=ann_{R}(Q)$
and $\beta _{q}=\frac{a}{g}$ in $R_{q}$. Since $\phi (S)=f(S)\in W$, $S\in
f^{-1}(W)\subseteq f^{-1}(U)$. As $f$ is continuous, $f^{-1}(W)$ is an open
neighborhood of $S$. We claim that for each $Q^{\prime }\in f^{-1}(W)$, $%
g\not\in ann_{R}(Q^{\prime })$. Suppose on the contrary that $g\in
ann_{R}(Q^{\prime })$ for some $Q^{\prime }\in f^{-1}(W)$. Then $f(Q^{\prime
})=\phi (Q^{\prime })\in W$. Since $\phi $ is a monomorphism, $%
ann_{R}(Q^{\prime })=ann_{R}(\phi (Q^{\prime }))$. So $g\in ann_{R}(\phi
(Q^{\prime }))$ for $\phi (Q^{\prime })\in W$, a contradiction. Therefore
for every open subset $U$ of $Spec^{s}(N)$, we can define the map%
\begin{equation*}
f^{\#}(U):\mathcal{O}(R,N)(U)\longrightarrow \mathcal{O}(R,M)(f^{-1}(U))
\end{equation*}%
as follows. For every $\beta \in \mathcal{O}(R,N)(U)$, $f^{\#}(U)(\beta )\in 
\mathcal{O}(R,M)(f^{-1}(U))$ is defined by $f^{\#}(U)(\left( \beta
_{p}\right) _{p\in Supp^{s}(U)})=(\beta _{ann_{R}(f(S)})_{S\in f^{-1}(U)}$.
As we mentioned above $f^{\#}(U)$ is a well-defined map an clearly it is a
ring homomorphism. Now we show that $f^{\#}$ is a locally ringed morphism.
Assume that $U$ and $V$ are open subsets of $Spec^{s}(N)$ with $V\subseteq U$
and $\beta =\left( \beta _{p}\right) _{p\in Supp^{s}(U)}\in \mathcal{O}%
(R,N)(U)$. Consider the diagram%
\begin{equation*}
\begin{array}{cccc}
\mathcal{O}(R,N)(U) & \underrightarrow{f^{\#}(U)} & \mathcal{O}%
(R,M)(f^{-1}(U)) &  \\ 
\downarrow \rho _{UV}^{\prime } &  & \downarrow \rho _{f^{-1}(U)f^{-1}(V)} & 
\\ 
\mathcal{O}(R,N)(V) & \underrightarrow{f^{\#}(V)} & \mathcal{O}%
(R,M)(f^{-1}(V)). &  \\ 
&  &  & 
\end{array}%
\end{equation*}%
Then%
\begin{eqnarray*}
\rho _{f^{-1}(U)f^{-1}(V)}f^{\#}(U)(\left( \beta _{p}\right) _{p\in
Supp^{s}(U)}) &=&\rho _{f^{-1}(U)f^{-1}(V)}((\beta _{ann_{R}(f(S)})_{S\in
f^{-1}(U)}) \\
&=&(\beta _{ann_{R}(f(S)})_{S\in f^{-1}(V)}=f^{\#}(V)((\beta _{p})_{p\in
Supp^{s}(V)}) \\
&=&f^{\#}(V)\rho _{UV}^{\prime }(\left( \beta _{p}\right) _{p\in
Supp^{s}(U)}).
\end{eqnarray*}

Therefore we get that $\rho _{f^{-1}(U)f^{-1}(V)}f^{\#}(U)=f^{\#}(V)\rho
_{UV}^{\prime }$. Thus the above diagram is commutative. This shows that $%
f^{\#}:\mathcal{O}(R,N)\longrightarrow f_{\ast }(\mathcal{O}(R,M))$ is a
morphism of sheaves. By Theorem \ref{Stalk}, the map on the stalks, $%
f_{S}^{\#}:\mathcal{O}(R,N)_{f(S)}\longrightarrow \mathcal{O}(R,M)_{S}$ is
clearly the map of local rings $R_{ann_{R}(f(S))}\longrightarrow
R_{ann_{R}(S)}$ which maps $\frac{r}{s}\in R_{ann_{R}(f(S))}$ to $\frac{r}{s}
$ again. This implies that $(f,f^{\#}):(Spec^{s}(N),\mathcal{O}%
(R,N))\longrightarrow (Spec^{s}(M),\mathcal{O}(R,M))$ is a morphism of
locally ringed spaces.
\end{proof}

\begin{theorem}
\label{Theorem13}Let $M$, $A$ be $R$-modules, $\phi :R\longrightarrow S$ be
a ring homomorphism, let $N,B$ be $S$-modules and $M$ be a secondful $R$%
-module such that $Spec^{s}(M)$ is a $T_{0}$-space and $ann_{R}(M)\subseteq
ann_{R}(N)$. If $\delta :A\longrightarrow B$ is an $R$-homomorphism, then $%
\phi $ induces a morphism of sheaves%
\begin{equation*}
h^{\#}:\mathcal{O}(A,M)\longrightarrow h_{\ast }(\mathcal{O}(B,N))
\end{equation*}
\end{theorem}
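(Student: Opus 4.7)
The first task is to define the underlying continuous map $h\colon Spec^{s}(N)\longrightarrow Spec^{s}(M)$, which is not specified in the statement. For a second submodule $T\in Spec^{s}(N)$, viewing $T$ as an $R$-module via $\phi$ gives $ann_{R}(T)=\phi^{-1}(ann_{S}(T))$, so the prime ideal $\mathfrak{p}_{T}:=\phi^{-1}(ann_{S}(T))$ of $R$ contains $ann_{R}(M)$: indeed $ann_{R}(M)\subseteq ann_{R}(N)=\phi^{-1}(ann_{S}(N))\subseteq \phi^{-1}(ann_{S}(T))$. By Proposition \ref{Lemma 2.1}, since $M$ is secondful and $Spec^{s}(M)$ is a $T_{0}$-space, the natural map $\psi_{M}^{s}$ is a bijection onto $V(ann_{R}(M))$, so there is a unique second submodule of $M$ whose annihilator equals $\mathfrak{p}_{T}$, and I set $h(T):=(0:_{M}\mathfrak{p}_{T})$. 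Continuity is then immediate from the identity $h^{-1}(Y_{r})=\{T\in Spec^{s}(N):\phi(r)\notin ann_{S}(T)\}=Y_{\phi(r)}$, which is open in $Spec^{s}(N)$ for every $r\in R$; since the $Y_{r}$ form a base for the dual Zariski topology, $h$ is continuous.

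Next I would construct, for every open $U\subseteq Spec^{s}(M)$, the component map $h^{\#}(U)\colon\mathcal{O}(A,M)(U)\longrightarrow \mathcal{O}(B,N)(h^{-1}(U))$. Given $\beta=(\beta_{p})_{p\in Supp^{s}(U)}$ and $T\in h^{-1}(U)$ with $q:=ann_{S}(T)$, I choose an open neighborhood $W\subseteq U$ of $h(T)$ on which $\beta$ is represented by $a/t$ with $a\in A$ and $t\in R\setminus \phi^{-1}(q)$, and set $\gamma_{q}:=\delta(a)/\phi(t)\in B_{q}$. Because $\delta$ is $R$-linear, the rule $a/t\mapsto \delta(a)/\phi(t)$ defines a well-defined $R$-module homomorphism $A_{\phi^{-1}(q)}\longrightarrow B_{q}$, so $\gamma_{q}$ depends only on the germ $\beta_{\phi^{-1}(q)}$, not on the choice of $W$ or representative. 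The local form condition for $\gamma:=(\gamma_{q})$ to lie in $\mathcal{O}(B,N)(h^{-1}(U))$ is inherited from $\beta$ via pullback: on the open neighborhood $h^{-1}(W)\ni T$ of $h^{-1}(U)$, the element $\gamma$ is uniformly represented by $\delta(a)/\phi(t)$, and $\phi(t)\notin ann_{S}(T')$ for every $T'\in h^{-1}(W)$ by the continuity computation above.

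Finally I would check that $h^{\#}$ commutes with restriction: if $V\subseteq U$ is an inclusion of open subsets of $Spec^{s}(M)$, then both $h^{\#}(V)\rho_{UV}$ and $\rho_{h^{-1}(U),h^{-1}(V)}h^{\#}(U)$ produce the same family indexed by $Supp^{s}(h^{-1}(V))$, since each acts pointwise by the same rule. Together with the evident $R$-linearity of each $h^{\#}(U)$, this yields the desired morphism of sheaves $h^{\#}\colon \mathcal{O}(A,M)\longrightarrow h_{\ast}\mathcal{O}(B,N)$. The main obstacle is the well-definedness step: one must verify that the germwise rule $a/t\mapsto \delta(a)/\phi(t)$ produces an element of $B_{q}$ independent of the chosen neighborhood and representative, and that the local-form axiom transfers correctly from $\mathcal{O}(A,M)$ on $U$ to $\mathcal{O}(B,N)$ on $h^{-1}(U)$; everything else reduces to bookkeeping with annihilators under $\phi$.
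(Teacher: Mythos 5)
Your construction is essentially the paper's: the same map $h$ (the paper writes it as $(\psi _{M}^{s})^{-1}\circ d\circ \psi _{N}^{s}$, which unwinds to exactly your description via $\mathfrak{p}_{T}=\phi ^{-1}(ann_{S}(T))$), the same localization maps $a/t\mapsto \delta (a)/\phi (t)$, and the same checks that local representability and compatibility with restrictions pull back along $h$; your direct computation $h^{-1}(Y_{r})=Y_{\phi (r)}$ is only a cosmetic variant of the paper's continuity argument by composition of continuous maps. One notational slip: $h(T)$ should be the unique second submodule of $M$ whose annihilator is $\mathfrak{p}_{T}$ (which exists and is unique by secondfulness plus the $T_{0}$ hypothesis), not $(0:_{M}\mathfrak{p}_{T})$ itself, which need not be second --- but this is harmless since only $ann_{R}(h(T))=\mathfrak{p}_{T}$ is ever used.
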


\begin{proof}
Since $ann_{R}(M)\subseteq ann_{R}(N)$, $\phi $ induces the homomorphism%
\begin{equation*}
\Theta :R/ann_{R}(M)\longrightarrow S/ann_{S}(N),\Theta (r+ann_{R}(M))=\phi
(r)+ann_{S}(N).
\end{equation*}

It is well-known that the maps

$f:Spec(S)\longrightarrow Spec(R)$ defined by $f(p)=\phi ^{-1}(p)$ and

$d:Spec(S/ann_{S}(N))\longrightarrow Spec(R/ann_{R}(M))$ defined by $d(%
\overline{p})=\Theta ^{-1}(\overline{p})$ and

$\psi _{N}^{s}:Spec^{s}(N)\longrightarrow Spec(S/ann_{S}(N))$ defined by $%
\psi _{N}^{s}(Q)=ann_{S}(Q)/ann_{S}(N)$ for each $Q\in Spec^{s}(N)$ are
continuous. Also, $\psi _{M}^{s}:Spec^{s}(M)\longrightarrow
Spec(R/ann_{R}(M))$ is a homeomorphism by \cite[Theorem 6.3]{ATF1}.
Therefore the map $h:Spec^{s}(N)\longrightarrow Spec^{s}(M)$ defined by $%
h(Q)=(\psi _{M}^{s})^{-1}d\psi _{N}^{s}(Q)$ is continuous. Also, for each $%
Q\in Spec^{s}(N)$, we get an $R$-homomorphism%
\begin{equation*}
\begin{array}{cccc}
\phi _{ann_{S}(Q)}: & A_{f(ann_{S}(Q))} & \longrightarrow & B_{ann_{S}(Q)}
\\ 
& \frac{a}{s} & \longrightarrow & \frac{\delta (r)}{\phi (s)}%
\end{array}%
\end{equation*}%
Let $U$ be an open subset of $Spec^{s}(M)$ and $t=(t_{ann_{R}(P)})_{P\in
U}\in \mathcal{O}(A,M)(U)$. Suppose that $T\in h^{-1}(U)$. Then $h(T)\in U$
and there exists an open neighborhood $W$ of $h(T)$ with $h(T)\in W\subseteq
U$ and elements $r,g\in R$ such that for each $Q\in W$, we have $%
t_{ann_{R}(Q)}=\frac{a}{g}\in A_{ann_{R}(Q)}$ where $g\not\in ann_{R}(Q)$,
hence $g\not\in ann_{R}(h(T))$. By definition of $h$, $ann_{R}(h(K))=\phi
^{-1}(ann_{S}(K))$ for every $K\in h^{-1}(W)$. So $\phi (g)\not\in
ann_{S}(K) $ for $g\not\in ann_{R}(h(K))$. Thus $\phi _{ann_{S}(T)}(\frac{a}{%
g})=\frac{\delta (a)}{\phi (g)}$ define a section $\mathcal{O}%
(B,N)(h^{-1}(U))$. We can define%
\begin{equation*}
h^{\#}(U):\mathcal{O}(A,M)(U)\longrightarrow h_{\ast }(\mathcal{O}(B,N)(U))=%
\mathcal{O}(B,N)(h^{-1}(U))
\end{equation*}%
by $h^{\#}(U)((t_{ann_{R}(P)})_{P\in U})=\left( \phi
_{ann_{S}(T)}(t_{ann_{R}(h(T))})\right) _{T\in h^{-1}(U)}$ for each $%
(t_{ann_{R}(P)})_{P\in U}\in \mathcal{O}(R,M)(U)$. Assume that $V\subseteq U$%
. Consider the diagram%
\begin{equation*}
\begin{array}{ccc}
\mathcal{O}(A,M)(U) & \underrightarrow{h^{\#}(U)} & \mathcal{O}%
(B,N)(h^{-1}(U)) \\ 
\downarrow \rho _{UV} &  & \downarrow \rho _{h^{-1}(U)h^{-1}(V)}^{\prime }
\\ 
\mathcal{O}(A,M)(V) & \underrightarrow{h^{\#}(V)} & \mathcal{O}%
(B,N)(h^{-1}(V))%
\end{array}%
\end{equation*}%
We see that%
\begin{eqnarray*}
\rho _{h^{-1}(U)h^{-1}(V)}^{\prime }h^{\#}(U)((t_{ann_{R}(P)})_{P\in U})
&=&\rho _{h^{-1}(U)h^{-1}(V)}^{\prime }\left( \left( \phi
_{ann_{S}(T)}(t_{ann_{R}(h(T))})\right) _{T\in h^{-1}(U)}\right) \\
&=&\left( \phi _{ann_{S}(T)}(t_{ann_{R}(h(T))})\right) _{T\in h^{-1}(V)} \\
&=&h^{\#}(V)((t_{ann_{R}(P)})_{P\in V})=h^{\#}(V)\rho
_{UV}((t_{ann_{R}(P)})_{P\in U})
\end{eqnarray*}%
and hence $\rho _{h^{-1}(U)h^{-1}(V)}^{\prime }h^{\#}(U)=h^{\#}(V)\rho _{UV}$
for every open subset $U$ of $Spec^{s}(M)$. So the above diagram is
commutative. It follows that $h^{\#}:\mathcal{O}(A,M)\longrightarrow h_{\ast
}(\mathcal{O}(B,N))$ is a morphism of sheaves.
\end{proof}

\begin{corollary}
\label{morphism2}Let $\phi :R\longrightarrow S$ be a ring homomorphism, let $%
N$ be an $S$-module and $M$ be a secondful $R$-module such that $Spec^{s}(M)$
is a $T_{0}$-space and $ann_{R}(M)\subseteq ann_{R}(N)$. Then $\phi $
induces a morphism of locally ringed spaces%
\begin{equation*}
(h,h^{\#}):(Spec^{s}(N),\mathcal{O}(S,N))\longrightarrow (Spec^{s}(M),%
\mathcal{O}(R,M))
\end{equation*}
\end{corollary}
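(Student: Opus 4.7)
The plan is to apply Theorem \ref{Theorem13} in the special case $A = R$, $B = S$ (regarded as an $R$-module via $\phi$), and $\delta = \phi$, which is automatically an $R$-module homomorphism. The hypotheses of Theorem \ref{Theorem13} are exactly those of the corollary (secondful, $T_{0}$, and $ann_{R}(M) \subseteq ann_{R}(N)$), so Theorem \ref{Theorem13} immediately yields a continuous map $h: Spec^{s}(N) \to Spec^{s}(M)$ and a morphism of sheaves of rings $h^{\#}: \mathcal{O}(R,M) \to h_{\ast}(\mathcal{O}(S,N))$. What remains is to upgrade the pair $(h, h^{\#})$ from a morphism of ringed spaces to a morphism of \emph{locally} ringed spaces, which amounts to verifying that for every $Q \in Spec^{s}(N)$ the induced map on stalks is a local homomorphism of local rings.

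First I would identify the stalk map explicitly. By Theorem \ref{Stalk}, for every $Q \in Spec^{s}(N)$ we have
\[
\mathcal{O}(S,N)_{Q} \simeq S_{ann_{S}(Q)}, \qquad \mathcal{O}(R,M)_{h(Q)} \simeq R_{ann_{R}(h(Q))}.
\]
From the construction of $h$ in the proof of Theorem \ref{Theorem13}, namely $h(Q) = (\psi_{M}^{s})^{-1} d \psi_{N}^{s}(Q)$, together with the fact that $\psi_{M}^{s}$ is a homeomorphism under the secondful and $T_{0}$ hypotheses (by Proposition \ref{Lemma 2.1}), one checks that
\[
ann_{R}(h(Q)) = \phi^{-1}(ann_{S}(Q)).
\]
Tracing the formula $h^{\#}(U)((t_{ann_{R}(P)})_{P \in U}) = (\phi_{ann_{S}(T)}(t_{ann_{R}(h(T))}))_{T \in h^{-1}(U)}$ from Theorem \ref{Theorem13} through the direct limit defining the stalk, the induced stalk map becomes the canonical localized ring homomorphism
\[
h^{\#}_{Q}: R_{\phi^{-1}(ann_{S}(Q))} \longrightarrow S_{ann_{S}(Q)}, \qquad \frac{r}{g} \longmapsto \frac{\phi(r)}{\phi(g)}.
\]

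Finally, $h^{\#}_{Q}$ is visibly a local homomorphism: the maximal ideal $\phi^{-1}(ann_{S}(Q)) \cdot R_{\phi^{-1}(ann_{S}(Q))}$ is sent into $ann_{S}(Q) \cdot S_{ann_{S}(Q)}$, which is the maximal ideal of $S_{ann_{S}(Q)}$. Hence $(h, h^{\#})$ is a morphism of locally ringed spaces, as required. The only genuinely non-routine step is the identification $ann_{R}(h(Q)) = \phi^{-1}(ann_{S}(Q))$, which is where both the secondful hypothesis (so that $\psi_{M}^{s}$ is bijective) and the $T_{0}$ hypothesis (so that $\psi_{M}^{s}$ is a homeomorphism, via Proposition \ref{Lemma 2.1}) are essential; everything else is a direct reading of Theorem \ref{Theorem13} combined with Theorem \ref{Stalk}.
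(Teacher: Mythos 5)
Your proposal is correct and follows essentially the same route as the paper: specialize Theorem \ref{Theorem13} to $A=R$, $B=S$, $\delta=\phi$, then use Theorem \ref{Stalk} to identify the stalk map as the canonical homomorphism $R_{\phi^{-1}(ann_{S}(Q))}\longrightarrow S_{ann_{S}(Q)}$. You actually spell out two points the paper leaves implicit --- the identification $ann_{R}(h(Q))=\phi^{-1}(ann_{S}(Q))$ and the verification that the stalk map is local --- which is a welcome addition rather than a deviation.
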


\begin{proof}
Taking $A=R$, $B=S$ and $\delta =\phi $ in Theorem \ref{Theorem13}, we get
the morphism of sheaves $h^{\#}:\mathcal{O}(R,M)\longrightarrow h_{\ast }(%
\mathcal{O}(S,N))$ which is defined as in the proof of Theorem \ref%
{Theorem13}. By Theorem \ref{Stalk}, the map on the stalks $h_{T}^{\#}:%
\mathcal{O}(R,M)_{h(T)}\longrightarrow \mathcal{O}(S,N)_{T}$ is clearly the
local homomorphism%
\begin{equation*}
\begin{array}{cccc}
\phi _{ann_{S}(T)}: & R_{f(ann_{S}(T))} & \longrightarrow & S_{ann_{S}(T)}
\\ 
& \frac{r}{s} & \longrightarrow & \frac{\phi (r)}{\phi (s)}%
\end{array}%
\end{equation*}%
where $f$ is the map defined in the proof of Theorem \ref{Theorem13}. This
implies that 
\begin{equation*}
(h,h^{\#}):(Spec^{s}(N),\mathcal{O}(S,N))\longrightarrow (Spec^{s}(M),%
\mathcal{O}(R,M))
\end{equation*}%
is a locally ringed spaces.
\end{proof}

\bigskip

\textbf{Acknowledgement }

The authors would like to thank the Scientific Technological Research
Council of Turkey (TUBITAK) for funding this work through the project
114F381.

The second author was supported by the Scientific Research Project
Administration of Akdeniz University.

\textit{Se\c{c}il \c{C}eken: Trakya University, Faculty of Sciences,
Department of Mathematics, Edirne, Turkey.}

\textit{Mustafa Alkan: Akdeniz University, Faculty of Sciences, Department
of Mathematics, Antalya, Turkey.}

\end{document}